\author{Julien Melleray}
\address{Universit\'e Claude Bernard -- Lyon 1 \\
  Institut Camille Jordan, CNRS UMR 5208 \\
  43 boulevard du 11 novembre 1918 \\
  69622 Villeurbanne Cedex \\
  France}
\numberwithin{equation}{section}
\title[A dynamical proof of Matui's absorption theorem]{A dynamical proof of Matui's absorption theorem}
\newcounter{dummy}
\newcommand\myitem[1][]{\item[#1]\refstepcounter{dummy}\def\@currentlabel{#1}}
\begin{document}

\begin{abstract}
We give a dynamical, relatively elementary proof of an ``absorption theorem'' which is closely related to a well-known result due to Matui. The construction is in the spirit of an earlier joint work of the author and S. Robert. In an appendix  we explain how to use this result to correct the dynamical proof given by Melleray--Robert of a classification theorem for orbit equivalence of minimal ample groups due to Giordano, Putnam and Skau (the original argument had a gap).
\end{abstract}
\maketitle
%%%%%%%%%%%%%%%%%%%%%%%%%%%%%%%%%%%%%%%%%%%%%%%%%%%%%%%%%%%%%%%%%%%%%%%%%%%%%%%%%%%%%%%%%%%%%%%%%%%%%%%%%%%%%%%%%%%%%%
%%%%%%%%%%%%%%%%%%%%%%%%%%%%%%%%%%%%%%%%%%%%%%%%%%%%%%%%%%%%%%%%%%%%%%%%%%%%%%%%%%%%%%%%%%%%%%%%%%%%%%%%%%%%%%%%%%%%%%

\section{Introduction}
This article is a continuation of \cite{Melleray2023}, and the motivation for this work is an error in one of the main arguments in that paper. We are concerned with actions of countable groups by homeomorphisms on the Cantor space $X$ which are \emph{minimal}, i.e.~such that all orbits are dense. Given an action $\Gamma \actson X$, we denote by $R_\Gamma$ the associated equivalence relation and say that two equivalence relations $R,S$ on $X$ are \emph{orbit equivalent} if there exists a homeomorphism $h \colon X \to X$ such that $(h \times h)R=S$.

Denote by $M(\Gamma)$ the set of $\Gamma$-invariant Borel probability measures on $X$.
It is not hard to see that if $h$ realizes an orbit equivalence between the relations induced by actions of two countable groups $\Gamma$, $\Lambda$ on $X$ then one must have $h_*M(\Gamma)=M(\Lambda)$. For minimal actions of $\Z$ there is a converse to that statement, proved by Giordano, Putnam and Skau \cite{Giordano1995}: if two minimal actions of $\Z$ preserve the same Borel probability measures then they are orbit equivalent. This theorem is far from being obvious; for instance, preserving the same Borel probability measures certainly does not imply that both actions have the same orbits. 

Since \cite{Giordano1995}, this classification theorem has been re-proved in several papers, including \cite{Putnam2010} and \cite{Keane2011}; all known proofs are fairly technical. Motivated in part by the perspective of extending this classification theorem for other group actions (so far it is known to hold for free actions of $\Z^d$ for any $d$, see \cite{Giordano2010}), the author and S. Robert claimed to give in \cite{Melleray2023} a purely dynamical proof of the classification theorem. I discovered recently that this proof has a gap.

Loosely speaking, the proof strategy in \cite{Melleray2023}, similarly to what is done in \cite{Giordano2004} or \cite{Putnam2010}, is to first prove that a ``small extension'' of an equivalence relation induced by a minimal $\Z$-action results in a relation which is orbit equivalent to the relation one started with; then to prove that given two equivalence relations $R$, $S$ induced by minimal $\Z$-actions which preserve the same Borel probability measures, one can produce a third equivalence relation $T$ which is a small extension of both $R$ and $S$, thereby deriving that $R$ and $S$ are orbit equivalent.

A theorem stating that a ``small extension'' of a given equivalence relation $R$ is orbit equivalent to $R$ is called an \emph{absorption theorem}. The first of those was given in \cite{Giordano2004}, then it was improved in \cite{Giordano2008} and the strongest such theorem, due to Matui, appeared in \cite{Matui2008}. Matui's proof builds on the proofs of earlier absorption theorems, making for a fairly involved argument where the dynamical aspects are complicated to understand.

In \cite{Melleray2023}, instead of working with minimal $\Z$-actions, we followed ideas of Giordano, Putnam and Skau and worked instead with minimal actions of certain locally finite subgroups of $\Homeo(X)$, which were called \emph{ample groups} by Krieger \cite{Krieger1979}. As pointed out by Putnam \cite{Putnam2010}, it seems more natural to first establish a classification theorem for minimal actions of ample groups and then derive the theorem for minimal $\Z$-actions. 

S. Robert and the author claimed in \cite{Melleray2023} to provide a dynamical proof of an absorption theorem that, while weaker than Matui's, was sufficient to prove the classification theorem of Giordano, Putnam and Skau, via an argument that only involved cutting-and-stacking methods. There is, however, a gap in that argument; once that gap is identified, it becomes apparent that for that approach to work one needs a stronger absorption theorem, and that Matui's absorption theorem is adequate to the task. We give here an elementary proof of such a strong absorption theorem (at the end of the paper, we briefly sketch how one can recover Matui's absorption theorem from our main result, Theorem \ref{t:absorption}). 

To study ample group actions up to orbit equivalence, it is common to employ Bratteli diagrams, which have been instrumental in the proofs of several deep results (see e.g.~\cite{Giordano1995}, \cite{Giordano2004}, \cite{Giordano2008}, \cite{Matui2008}). While those diagrams are very natural for someone with a background in operator algebras (and are nicely connected with homological invariants) and their effectiveness to tackle the type of questions we are concerned with is well established, their use can lead to proofs where the dynamical aspects are hard to grasp. Here, as in \cite{Melleray2023}, we always work directly with clopen subsets of $X$ via cutting-and-stacking methods. Still, it must be pointed out that many of the ideas and concepts that we use are closely related to those found in the works of Giordano, Matui, Putnam and Skau mentioned above. 

Let us briefly discuss the organization of the paper. After reviewing some basic notions needed for our argument, we first develop some elementary theory of what we call malleable subsets. These are analogs of the étale extensions considered in \cite{Giordano2004}, \cite{Giordano2008} and \cite{Matui2008} and provide the paradigm for the ``small extensions'' alluded to above. We then need to develop some machinery in order to prove the absorption theorem. To that end, we
extend a theorem of Krieger \cite{Krieger1979}; a consequence of that work is a homogeneity result which is instrumental in our proof of the absorption theorem (see Lemma \ref{l:any_copy_will_do}). Using this theorem of Krieger as a step towards the classification theorem is one of the key ideas of \cite{Melleray2023} and our strategy here is similar.
 Then we prove our version of Matui's absorption theorem (Theorem~\ref{t:absorption}), using a method which is related to what Putnam calls the ``Hilbert--Bratteli hotel'' in \cite{Putnam2018}. Informally, to prove that the relation $R_\Gamma$ induced by the action of a minimal ample group $\Gamma$ is orbit equivalent to a small extension of itself, we begin by showing (see Lemma \ref{l:everyone_is_a_small_extension} and how it is used to prove Theorem \ref{t:absorption}) that $R_\Gamma$ can be obtained from the relation $R_\Lambda$ induced by another minimal ample group $\Lambda$ by repeating countably many times the same small extension. Thus one more small extension should not (and, as it turns out, does not) change the orbit equivalence class of $R_\Gamma$. The absorption theorem may thus be thought of as an analogue of the classical ordinal equation $1+ \omega=\omega$ (hence the analogy with the Hilbert hotel). After that we briefly sketch how to recover Matui's absorption theorem from our main result.

At the end of the paper the reader will find an appendix, which serves as a corrigendum to \cite{Melleray2023}. There I assume that the reader is familiar with the arguments and notations of \cite{Melleray2023} and explain how use the improved absorption theorem so as to fix the proof of the classification theorem for orbit equivalence given in \cite{Melleray2023}.

\medskip
\emph{Acknowledgements.} I am very grateful to F. Le Maître for making many useful comments and suggestions after reading a first version of this article. I am also indebted to an anonymous referee for thoughtful advice.

\section{Background and vocabulary}
We recall some notions and terminology. The interested reader can find a much more detailed exposition in \cite{Melleray2023}, with proofs for some statements we only mention here in passing.

Assume that $X$ is a compact, metrizable, $0$-dimensional space. Given a subgroup $\Gamma \le \Homeo(X)$, we denote by $M(\Gamma)$ the set of all Borel probability measures on $X$ which are $\Gamma$-invariant. The \emph{full group} $F(\Gamma)$ generated by $\Gamma$ is the set of all $g \in \Homeo(X)$ such that there exists a clopen partition $X= \bigsqcup_{i=1}^n U_i$ and $\gamma_1,\ldots,\gamma_n \in \Gamma$ such that $g(x)=\gamma_i(x)$ for all $x \in U_i$. We say that $\Gamma$ is a full group if $\Gamma=F(\Gamma)$. Note that we always have $M(\Gamma)=M(F(\Gamma))$.

For any countable subgroup $\Gamma \le \Homeo(X)$, we denote by $R_\Gamma$ the associated equivalence relation on $X$, and let $[R_\Gamma]$ denote the subgroup of all $g \in \Homeo(X)$ which map each $\Gamma$-orbit to itself. Then $[R_\Gamma]$ is a full group and $M([R_\Gamma])=M(\Gamma)$.

An \emph{ample group over $X$} is a countable, locally finite full group $\Gamma \le \Homeo(X)$ with the property that for all $\gamma \in \Gamma$ the set $\{ x \in X : \gamma(x)=x \}$ is clopen in $X$.

Given a subgroup $\Gamma \le \Homeo(X)$ and a Boolean subalgebra $\mcA$ of $\Clopen(X)$, we say that $(\mcA,\Gamma)$ is a \emph{unit system}\footnote{There is (again) an imprecision in \cite{Melleray2023}: the definition given there is different from this one for unit systems which are not finite. This is not an issue since those are not used anywhere in \cite{Melleray2023}.} if:
\begin{itemize}
\item For every $A \in \mcA$ and every $\gamma \in \Gamma$, $\gamma(A) \in \mcA$, giving us an evaluation map $e_\mcA \colon \Gamma \to \mathrm{Aut}(\mcA)$.
\item The morphism $e_\mcA$ is injective (equivalently, the only element of $\Gamma$ mapping every element of $\mcA$ to itself is the identity).
\item For every $\gamma \in \Gamma$, $\{x : \gamma(x)= x \} \in \mcA$.
\item For every $g \in \Homeo(X)$, if there exists a partition $X=\bigsqcup_{i=1}^n A_i$ with $A_i \in \mcA$ such that $g$ coincides on each $A_i$ with some $\gamma_i \in \Gamma$, then $g \in \Gamma$.
\end{itemize}
We sometimes denote by $\Gamma_\mcA$ the subgroup $e_\mcA(\Gamma)$ of $\mathrm{Aut}(\mcA)$. 

One says that the unit system $(\mcA,\Gamma)$ is \emph{finite} if $\mcA$ is finite, in which case $\Gamma$ is also finite. We say that a unit system $(\mcB,\Sigma)$ \emph{refines} another unit system $(\mcA,\Gamma)$ if $\mcB$ contains $\mcA$ and $\Sigma$ contains $\Gamma$.

Krieger \cite{Krieger1979}*{Lemma~2.1} proved that for any ample group there exists a sequence $(\mcA_n,\Gamma_n)_n$ of finite unit systems such that $(\mcA_{n+1},\Gamma_{n+1})$ refines $(\mcA_n,\Gamma_n)$ for all $n$ and
\[\bigcup_n \mcA_n=\Clopen(X) \quad ; \quad \bigcup_n \Gamma_n=\Gamma .\] 
We say that $(\mcA_n,\Gamma_n)$ is an \emph{exhaustive sequence} for $(X,\Gamma)$.

\textbf{From now on (throughout the paper) the letter $X$ stands for the Cantor space}.

An action $\Gamma \actson X$ by homeomorphisms is \emph{minimal} if all of its orbits are dense; we say that $\varphi \in \Homeo(X)$ is minimal if the $\Z$-action $n \cdot x=\varphi^n(x)$ is minimal. We denote by $F(\varphi)$ the full group generated by $\{\varphi^n : n \in \Z\}$ (it is often denoted $\llbracket \varphi \rrbracket$ in the literature). This is a countable group which acts minimally on $X$; for any $x_0 \in X$, the subgroup 
\[F_{x_0}(\varphi)= \{ \gamma \in F(\varphi) : \gamma(O^+(x_0))=O^+(x_0)\}\]
is an ample group (here $O^+(x_0)=\{ \varphi^n(x_0) : n \ge 0\}$). The orbits of $F_{x_0}(\varphi) \actson X$ are the same as the $\varphi$-orbits, except for the orbit $O(x_0)$ which splits into its positive part $O^+(x_0)$ and negative part $O^-(x_0)$. It is proved in \cite{Melleray2023} that all minimal ample groups over $X$ can be realized in this way, so minimal ample groups and topological full groups of minimal $\Z$-actions are closely related.

Any action $\Gamma \actson X$ also induces an equivalence relation $\sim_\Gamma$ on $\Clopen(X)$, where $A \sim_\Gamma B$ iff there exists $\gamma \in \Gamma$ such that $\gamma(A)=B$. If $\Gamma$ is ample, then $\sim_\Gamma$ is \emph{full} in the following sense: whenever $A, B \in \Clopen(X)$ are such that $A= \bigsqcup_{i=1}^n A_i$, $B=\bigsqcup_{i=1}^n B_i$ and $A_i \sim_\Gamma B_i$ for all $i$ then $A \sim_\Gamma B$.

%Note that the subgroup of $\Homeo(X)$ consisting of all homeomorphisms preserving $\sim_\Gamma$ is closed (for the usual Polish topology on $\Homeo(X)$) and contains $\Gamma$; conversely, let $g$ be such a homeomorphism, and $\mcA$ a clopen partition of $X$. For every atom $A$ of $\mcA$, $g(A) \sim_\Gamma A$ so there exists $\gamma_A \in \Gamma$ such that $\gamma_A(A)=g(A)$. The homeomorphism $\gamma$ defined by setting $\gamma(x)=\gamma_A(x)$ for each $x \in A$ (and each $A \in \mcA$) is an element of $\Gamma$; hence $g \in \overline{\Gamma}$. Thus, $\overline{\Gamma}$ is the group of all $g \in \Homeo(X)$ such that $g(A) \sim_{\Gamma} A$ for all $A \in \Clopen(X)$.

%This motivates the following terminology (see \cite{Melleray2023}*{Definition~3.9}).

\begin{defn}
%We say that two ample subgroups $\Gamma$, $\Lambda$ have \emph{isomorphic closures} if there exists $h \in \Homeo(X)$ such that $h \overline{\Gamma} h^{-1}=\overline{\Lambda}$ - equivalently, for any clopen $A,B$ one has $(A \sim_\Gamma B) \Leftrightarrow (h(A) \sim_\Lambda h(B))$.
We say that two ample subgroups $\Gamma$, $\Lambda$ \emph{induce isomorphic relations on $\Clopen(X)$} if there exists $h \in \Homeo(X)$ such that for any clopen $A,B$ one has $(A \sim_\Gamma B) \Leftrightarrow (h(A) \sim_\Lambda h(B))$.
\end{defn}

Krieger's theorem alluded to in the introduction (and which we strengthen in Section \ref{s:Krieger}) implies that if two ample groups $\Gamma$, $\Lambda$ induce isomorphic relations on $\Clopen(X)$ then $\Gamma$ and $\Lambda$ are conjugated in $\Homeo(X)$. In particular $R_\Gamma$ and $R_\Lambda$ are then orbit equivalent. 

%Another, coarser equivalence relation, which we denote by $\sim_\Gamma^*$, is defined by setting
%\[\forall A,B \in \Clopen(X) \quad (A \sim_\Gamma^* B) \Leftrightarrow (\forall \mu \in M(\Gamma) \ \mu(A) = \mu(B))\]
%These equivalence relations play an important role when studying ample group actions up to orbit equivalence. 
The following lemma, whose analogue for $\Z$-actions is due to Glasner and Weiss \cite{Glasner1995a}, is crucial for our approach.

% We note that, while the proof for ample groups is essentially the same as the Glasner--Weiss proof for $\Z$, we falsely claim in \cite{Melleray2023} that the argument works for topologically transitive actions (see the remark at the end of the Appendix).

\begin{lemma}[Glasner--Weiss]\label{l:Glasner_Weiss}
Assume that $\Gamma \le \Homeo(X)$ is an ample group. Fix $A,B \in \Clopen(X)$. 
If $\mu(A) < \mu(B)$ for all $\mu \in M(\Gamma)$ then there exists $\gamma \in \Gamma$ such that $\gamma(A)\subset B$.
%\item If $\mu'A)=\mu(B)$ for all $\mu \in M(\Gamma)$ then there exists $g \in F(R_\Gamma)$ such that $g(A)=B$.
\end{lemma}
%The first point above is true for any ample subgroup of $\Homeo(X)$, but the proof for the second property given in \cite{Melleray2023} requires minimality to work.

We will use another property of $\sim_\Gamma$ (which is implicitly used in \cite{Melleray2023}).

\begin{lemma}\label{l:equidecomposability_on_clopens_is_induced_by_full_group}
Let $\Gamma$ be an ample group over $X$, and $A$, $B$ be clopen subsets of $X$.

Assume that $A \sim_\Gamma B$ and that $A_1 \subseteq A$, $B_1 \subseteq B$ are clopen sets such that $A_1 \sim_\Gamma B_1$. Then $A \setminus A_1 \sim_\Gamma B \setminus B_1$.
\end{lemma}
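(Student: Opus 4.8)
The plan is to reduce the statement to a finite unit system, where $\sim_\Gamma$-equidecomposability becomes a matter of counting atoms orbit by orbit. First I would fix witnesses $\gamma,\delta \in \Gamma$ with $\gamma(A)=B$ and $\delta(A_1)=B_1$, and choose an exhaustive sequence $(\mcA_n,\Gamma_n)_n$ for $(X,\Gamma)$. Since this sequence is increasing with $\bigcup_n \mcA_n = \Clopen(X)$ and $\bigcup_n \Gamma_n = \Gamma$, there is an index $n$ with $A,B,A_1,B_1 \in \mcA_n$ and $\gamma,\delta \in \Gamma_n$. Because $\Gamma_n \subseteq \Gamma$, it then suffices to exhibit finite clopen partitions of $A\setminus A_1$ and $B\setminus B_1$ into pieces that are matched by single elements of $\Gamma_n$: fullness of $\sim_\Gamma$ will immediately upgrade such a piecewise matching to $A\setminus A_1 \sim_\Gamma B\setminus B_1$.

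Next I would pass to the finite picture. The finite Boolean algebra $\mcA_n$ has a finite set of atoms $\mathrm{At}$, and since each $g\in\Gamma_n$ maps $\mcA_n$ into itself it permutes $\mathrm{At}$; thus $\Gamma_n$ acts on $\mathrm{At}$ by permutations, each $g$ preserving every $\Gamma_n$-orbit $O\subseteq\mathrm{At}$ setwise. Every set of $\mcA_n$ is a disjoint union of atoms, and from $A_1\subseteq A$, $B_1\subseteq B$ we get $\mathrm{At}(A\setminus A_1)=\mathrm{At}(A)\setminus\mathrm{At}(A_1)$ and likewise for $B$. The key computation is then an orbit count: as $\gamma$ permutes atoms with $\gamma(\mathrm{At}(A))=\mathrm{At}(B)$ and fixes each orbit $O$ setwise, restricting to $O$ gives $|\mathrm{At}(A)\cap O| = |\mathrm{At}(B)\cap O|$, and the same argument for $\delta$ gives $|\mathrm{At}(A_1)\cap O| = |\mathrm{At}(B_1)\cap O|$. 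Subtracting yields $|\mathrm{At}(A\setminus A_1)\cap O| = |\mathrm{At}(B\setminus B_1)\cap O|$ for every orbit $O$. I can therefore biject $\mathrm{At}(A\setminus A_1)$ with $\mathrm{At}(B\setminus B_1)$ orbit by orbit; each matched pair of atoms lies in a common $\Gamma_n$-orbit, so some $g\in\Gamma_n\subseteq\Gamma$ carries one onto the other, and fullness of $\sim_\Gamma$ applied to these atomic partitions closes the argument. (The degenerate case $A\setminus A_1=\emptyset$ is handled by the same counting, which forces $B\setminus B_1=\emptyset$.)

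The main obstacle I anticipate is conceptual bookkeeping rather than any hard estimate: one must verify that the passage to a finite unit system genuinely \emph{linearizes} the problem, i.e.\ that elements of $\Gamma_n$ act as honest orbit-preserving permutations of the atoms, and that the hypothesis ``a single element carries $A$ onto $B$'' forces equality of the orbit counts, since it is precisely this equality that makes the counts subtract. The point requiring care is that the conclusion concerns the full ample group $\Gamma$: the matching is assembled from $\Gamma_n$-elements, and it is the fullness of $\sim_\Gamma$ --- not of the finite, non-full group $\Gamma_n$ --- that must be invoked at the end to glue the atomic pieces into a genuine $\sim_\Gamma$-equivalence.
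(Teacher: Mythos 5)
Your argument is correct and is essentially the paper's own proof: both pass to a finite unit system $(\mcA_n,\Gamma_n)$ containing $A,B,A_1,B_1$ and the witnessing elements, count atoms of each set within every $\Gamma_n$-orbit of atoms (your $|\mathrm{At}(C)\cap O|$ is exactly the paper's $n_C(U)$ for $U\in O$), and subtract. Your closing step via fullness of $\sim_\Gamma$ just makes explicit what the paper leaves implicit after the count.
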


\begin{proof}
Fix an exhaustive sequence $(\mcA_n,\Gamma_n)$ of finite unit systems for $\Gamma$. 
By assumption, there exists $n$ and $\gamma$, $\delta \in \Gamma_n$ such that $A$, $A_1$, $B$, $B_1$ all belong to $\mcA_n$, $\gamma A=B$ and $\delta A_1=B_1$.

Given an atom $U$ of $\mcA_n$ and $C \in \mcA_n$ define
\[n_C(U)= \left|\{V \in \Gamma_n U : V \subseteq C\}\right| .\]

Since $A,B \in \mcA_n$, the existence of $\gamma \in \Gamma_n$ such that $\gamma A=B$ amounts to saying that $n_A(U)=n_B(U)$ for every atom $U$ of $\mcA_n$; similarly, we obtain $n_{A_1}(U)= n_{B_1}(U)$ for every $U$.
Then for every $U$ we have 
\[ n_{A \setminus A_1}(U)=n_A(U) - n_{A_1}(U) = n_B(U)-n_{B_1}(U) = n_{B \setminus B_1}(U) \]
and that proves the lemma.
\end{proof}

\section{Malleable subsets}
Throughout this section we fix an ample $\Gamma$ group over $X$ which acts minimally. 
Below, we will use without mention the fact that for any closed subset $K$ of $X$, any clopen subset of $K$ is the intersection of $K$ with a clopen subset of $X$.

\begin{defn}[see \cite{Giordano2004}*{Definition~4.11}]
Let $K$ be a closed subset of $X$. We say that $K$ is $\Gamma$-\emph{thin} if $\mu(K)=0$ for every $\mu \in M(\Gamma)$.
\end{defn}

The next lemma is standard.
\begin{lemma}\label{l:small_clopen}
Assume that $K$ is a closed, $\Gamma$-thin subset of $X$. Then for every $\varepsilon >0$ there exists a clopen subset $U$ such that $K \subset U$ and $\mu(U) \le \varepsilon$ for all $\mu \in M(\Gamma)$.
\end{lemma}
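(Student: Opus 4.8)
The plan is to approximate $K$ from the outside by a decreasing sequence of clopen sets and then upgrade the pointwise smallness of their measures to smallness that is uniform over all of $M(\Gamma)$, via a compactness argument. First I would use that $X$ is compact and $0$-dimensional to write $K$ as a nested intersection of clopen neighbourhoods: for each $x \notin K$ pick a clopen set containing $x$ and disjoint from $K$, take complements to obtain clopen sets containing $K$, and intersect finitely many of them to produce a decreasing sequence $(U_n)_n$ of clopen sets with $U_n \supseteq K$ for all $n$ and $\bigcap_n U_n = K$.

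The key topological inputs are that $M(\Gamma)$ is compact in the weak$^*$ topology (the full space of Borel probability measures on the compact metrizable space $X$ is weak$^*$-compact, and $\Gamma$-invariance is defined by the closed conditions $\mu(\gamma^{-1}A)=\mu(A)$ for $\gamma \in \Gamma$, $A \in \Clopen(X)$), and that for any fixed clopen set $V$ the evaluation map $\mu \mapsto \mu(V)$ is weak$^*$-continuous, since the indicator $\mathbf{1}_V$ is a genuine continuous function on $X$. Setting $f_n(\mu) = \mu(U_n)$, I would record three facts: each $f_n$ is continuous on $M(\Gamma)$; the sequence is pointwise decreasing because $U_{n+1} \subseteq U_n$; and for each fixed $\mu$ one has $f_n(\mu) = \mu(U_n) \downarrow \mu(K) = 0$ by continuity of the finite measure $\mu$ from above, using that $K$ is $\Gamma$-thin.

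It then remains to pass from this pointwise convergence to a uniform bound, which is the only substantive point of the argument. I would invoke Dini's theorem: a decreasing sequence of continuous functions on the compact space $M(\Gamma)$ converging pointwise to the (continuous) zero function converges uniformly, so there is an $N$ with $f_N(\mu) \le \varepsilon$ for all $\mu \in M(\Gamma)$; taking $U = U_N$ finishes the proof. Equivalently, and without naming Dini, one can argue directly by compactness: for each $\mu$ choose $n_\mu$ with $\mu(U_{n_\mu}) < \varepsilon$, use continuity of $f_{n_\mu}$ to find a weak$^*$-neighbourhood of $\mu$ on which the same inequality holds, extract a finite subcover of $M(\Gamma)$, and let $N$ be the largest of the finitely many indices, so that monotonicity gives $\mu(U_N) \le \varepsilon$ uniformly. (Note that the case $M(\Gamma) = \emptyset$ is vacuous, and in any event minimality guarantees $M(\Gamma) \neq \emptyset$.) The argument uses neither minimality nor the ample structure in an essential way, which is consistent with the lemma being labelled standard; the whole content lies in the uniformity over the invariant measures, furnished by the compactness of $M(\Gamma)$.
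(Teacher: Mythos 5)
Your proof is correct and follows exactly the same route as the paper: approximate $K$ by a decreasing sequence of clopen neighbourhoods, note that $\mu \mapsto \mu(U_n)$ is continuous and decreases pointwise to $\mu(K)=0$ on the compact space $M(\Gamma)$, and apply Dini's theorem. The extra details you supply (construction of the sequence, weak$^*$-continuity of evaluation on clopen sets, the alternative finite-subcover argument) are all consistent with what the paper leaves implicit.
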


\begin{proof}
Pick a decreasing sequence $(U_n)_{n \in \N}$ of clopen subsets such that $\bigcap_n U_n = K$. Given $n \in \N$, the function $\Phi_n \colon \mu \mapsto \mu(U_n)$ is continuous on $M(\Gamma)$ (endowed with its usual compact topology). 

Furthermore, for every $\mu \in M(\Gamma)$ the sequence $(\Phi_n(\mu))_n$ decreases to $\mu(K)=0$. Since $M(\Gamma)$ is compact we can apply Dini's theorem and conclude that there is $n$ such that $\Phi_n(\mu) \le \varepsilon$ for all $\mu \in M(\Gamma)$, equivalently $\mu(U_n) \le \varepsilon$ for all $\mu \in M(\Gamma)$.
\end{proof}

Then the argument used to prove \cite{Melleray2023}*{Lemma 3.13} gives the following result.

\begin{lemma}\label{l:compatible_clopen}
Assume that $K$ is a closed $\Gamma$-thin subset. Let $U$ be a nontrivial clopen subset of $X$, and $A$ a clopen subset of $K$. Let $V \in \Clopen(X)$ be such that $A \subset V \cap K$ and $\mu(U)< \mu(V)$ for all $\mu \in M(\Gamma)$.

Then there exists $U' \in \Clopen(X)$ such that $U' \cap K =A$, $U' \subset V$, and $U' \sim_\Gamma U$.
\end{lemma}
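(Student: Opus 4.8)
The plan is to first replace $V$ by a cleaner clopen region inside which the only part of $K$ that survives is exactly $A$, and then to build the desired clopen set $U' \subseteq X$ inside that region by a cutting-and-reassembling argument that only uses the strict-inequality part of Lemma~\ref{l:Glasner_Weiss} together with the fullness of $\sim_\Gamma$. Note first that $A \subseteq K$ forces $\mu(A)=0$ for all $\mu \in M(\Gamma)$, and that, $A$ being clopen in $K$, the sets $A$ and $K \setminus A$ are disjoint closed subsets of $X$, both $\Gamma$-thin. Using compactness of $X$ I would separate them by a clopen set, and using Lemma~\ref{l:small_clopen} shrink it so as to obtain a clopen $N \supseteq K \setminus A$ with $N \cap A = \emptyset$ and $\mu(N)$ uniformly smaller than $\inf_\mu(\mu(V)-\mu(U))$, which is positive by continuity on the compact set $M(\Gamma)$. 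Setting $V' = V \setminus N$ then yields a clopen set with $A \subseteq V' \subseteq V$, $V' \cap K = A$, and $\mu(U) < \mu(V')$ for all $\mu$. Since any clopen $U'$ with $A \subseteq U' \subseteq V'$ automatically satisfies $U' \cap K = A$, the lemma reduces to finding such a $U'$ with $U' \sim_\Gamma U$.

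For the reduced problem I would first produce a thin clopen collar of $A$: as $\mu(A)=0$ and $\inf_\mu \mu(U) > 0$ (every nonempty clopen set has positive measure for a minimal action, by a standard covering argument), Lemma~\ref{l:small_clopen} gives a clopen $W$ with $A \subseteq W \subseteq V'$ and $\mu(W) < \mu(U)$ for all $\mu$. Now I reassemble a copy of $U$ out of $W$ and one extra piece placed in the gap. By Lemma~\ref{l:Glasner_Weiss}(1) there is $\gamma \in \Gamma$ with $\gamma(W) \subseteq U$; write $U_1 = U \setminus \gamma(W)$, a clopen set with $\mu(U_1) = \mu(U)-\mu(W) < \mu(V')-\mu(W) = \mu(V' \setminus W)$ for all $\mu$. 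Applying Lemma~\ref{l:Glasner_Weiss}(1) again yields $\delta \in \Gamma$ with $U_2 := \delta(U_1) \subseteq V' \setminus W$. I then set $U' = W \sqcup U_2 \subseteq V'$. Since $U = \gamma(W) \sqcup U_1$ with $W \sim_\Gamma \gamma(W)$ and $U_2 \sim_\Gamma U_1$, the fullness of $\sim_\Gamma$ gives $U' = W \sqcup U_2 \sim_\Gamma \gamma(W) \sqcup U_1 = U$, while $A \subseteq W \subseteq U' \subseteq V'$ gives $U' \cap K = A$, as required.

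The main obstacle is that the two requirements on $U'$ pull in opposite directions: pinning the trace $U' \cap K = A$ wants $U'$ to hug the measure-zero set $A$, whereas $U' \sim_\Gamma U$ forces $U'$ to carry positive measure. The device that resolves this is to first quarantine $K \setminus A$ inside a clopen set $N$ of negligible measure, so that the trace condition becomes the automatic consequence of $A \subseteq U' \subseteq V'$, and then to realize the equivalence by a one-sided placement argument. I deliberately avoid trying to match $\mu(U')$ to $\mu(U)$ for all $\mu$ simultaneously and invoking Lemma~\ref{l:Glasner_Weiss}(2): that route would both require the harder simultaneous-equality step and only produce an element of $F(R_\Gamma)$ rather than an honest $\sim_\Gamma$-equivalence. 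Relying solely on Lemma~\ref{l:Glasner_Weiss}(1) and on fullness sidesteps both difficulties.
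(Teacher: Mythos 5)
Your proof is correct and follows essentially the same strategy as the paper's: quarantine $(K\cap V)\setminus A$ inside a measure-small clopen set, take a small clopen set whose trace on $K$ is $A$, embed it into $U$ via Lemma~\ref{l:Glasner_Weiss}(1), place an equivalent copy of the leftover piece of $U$ in the remaining room of $V$ away from $K$, and conclude by fullness of $\sim_\Gamma$. The only difference is cosmetic (you perform the quarantine step first, the paper does it after choosing the neighborhood of $A$), and your observation that only part (1) of Glasner--Weiss is needed matches the paper's argument.
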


\begin{proof}
By compactness of $M(\Gamma)$ (and minimality of the action) there exists $\eta >0$ such that both $\eta < \mu(U)$ and $\mu(U)+ \eta < \mu(V)$ for all $\mu \in M(\Gamma)$.

By Lemma \ref{l:small_clopen}, there exists a clopen subset $W$ containing $K$ such that $\mu(W)< \eta$ for all $\mu \in M(\Gamma)$. Since $A$ is clopen in $K$, we have $A=K \cap B$ with $B$ clopen in $X$ and contained in $W \cap V$. Since $\mu(B) < \eta < \mu(U)$ for all $\mu \in M(\Gamma)$, we may apply Lemma \ref{l:Glasner_Weiss} and conclude that there exists a clopen $C \subseteq U$ such that $C \sim_\Gamma B$.

For all $\mu \in M(\Gamma)$ we have 
\[\mu(U \setminus C) +\eta = \mu(U) - \mu(C) + \eta < \mu(V)-\mu(B) = \mu(V \setminus B)\]
Applying Lemma \ref{l:small_clopen} again, we pick a clopen subset $D$ of $V \setminus B$ containing $(K \cap V) \setminus A$ and such that $\mu(D)< \eta$ for all $\mu \in M(\Gamma)$. We then have $\mu(U \setminus C) < \mu(V \setminus (B \cup D))$ so by Lemma \ref{l:Glasner_Weiss} there exists a clopen $E \subset V \setminus ( B \cup D)$ and such that $E \sim_\Gamma U\setminus C$. 

We conclude by setting $U'=B \cup E$.
\end{proof}

The way we used Lemma \ref{l:Glasner_Weiss} above is typical of our arguments; from now on we will not mention this lemma explicitly but it is very often in the background.

\begin{defn}
Let $K$ be a closed subset of $X$. We say that $K$ is $\Gamma$-\emph{étale} if for any $\gamma \in \Gamma$ and any clopen subset $A$ of $K$ the set $\gamma A \cap K$ is clopen in $K$. 

We say that $K$ is $\Gamma$-\emph{malleable} if it is a closed, $\Gamma$-thin and $\Gamma$-étale subset of $X$.
\end{defn}
The terminology ``étale'' comes from work of Giordano--Putnam--Skau which itself originates from operator algebra theory. Our definition amounts to saying that the restriction of $R_\Gamma$ to $K$ is étale for the topology induced by the topology of $R_\Gamma$, that is, $K$ is $R_\Gamma$-étale; see \cite{Giordano2004}*{Definition~2.1}. In \cite{Giordano2004}, one requires compatibility between $K$ and the topology of the étale equivalence relation under consideration; here, analogously, we ask for compatibility with the acting ample group. Our malleable sets correspond to the $R$-closed, $R$-étale and $R$-thin subsets considered in \cite{Giordano2004} and later in \cite{Giordano2008} and \cite{Matui2008}.

Assume that $K$ is $\Gamma$-étale. Given an involution $\gamma \in \Gamma$, define $\gamma_K \colon K \to K$ by setting $\gamma_K(x)= \gamma(x)$ if $\gamma(x) \in K$, $\gamma_K(x)=x$ otherwise. 
Denote $K_1= \{x \in K : \gamma(x) \in K\}$. Then $K_1$ is clopen in $K$ since $K$ is $\Gamma$-étale, and $\gamma_K(K_1)= K_1$. From this we obtain that $\gamma_K$ is a homeomorphic involution of $K$. 

\begin{defn}
Let $K$ be a $\Gamma$-étale subset of $X$, and $\gamma \in \Gamma$ an involution. We say that $\gamma$ is $K$-\emph{compatible} if $\gamma(K)=K$.
\end{defn}

\begin{lemma}
Let $K$ be a $\Gamma$-étale subset of $X$, and $\gamma \in \Gamma$ be an involution. Then there exists a $K$-compatible involution $\delta \in \Gamma$ such that $\delta_K=\gamma_K$.
\end{lemma}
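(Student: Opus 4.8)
The plan is to obtain $\delta$ by modifying $\gamma$ on a well-chosen clopen set: I keep $\delta=\gamma$ near the part of $K$ that $\gamma$ already preserves, and force $\delta=\mathrm{id}$ near the part that $\gamma$ pushes off $K$. Write $K_1=\{x\in K\colon \gamma(x)\in K\}$ and $L=K\setminus K_1$. Recall from the discussion above that $K_1$ is clopen in $K$ and $\gamma(K_1)=K_1$; moreover, for $x\in L$ one has $\gamma(x)\notin K$, so $\gamma(L)\cap K=\emptyset$. The goal is to arrange $\delta=\gamma$ on $K_1$ and $\delta=\mathrm{id}$ on $L$, since this immediately yields $\delta(K)=K$ and $\delta_K=\gamma_K$ (on $K_1$, $\delta(x)=\gamma(x)\in K$; on $L$, $\delta(x)=x$, matching $\gamma_K(x)=x$).

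The reduction I would use is the following: it suffices to produce a clopen $W\subseteq X$ with (i) $L\subseteq W$, (ii) $W\cap\gamma(K)=\emptyset$, and (iii) $\gamma(W)\cap W=\emptyset$. Indeed, since $K_1=K\cap\gamma(K)$, property (ii) gives both $W\cap K=L$ and $\gamma(W)\cap K=\emptyset$, while (iii) makes $W\cup\gamma(W)$ a $\gamma$-invariant clopen set. I would then define $\delta=\mathrm{id}$ on $W\cup\gamma(W)$ and $\delta=\gamma$ on its complement; because both pieces are $\gamma$-invariant and clopen, $\delta$ is a homeomorphic involution, and since it agrees with $\mathrm{id},\gamma\in\Gamma$ on a clopen partition and $\Gamma$ is a full group, $\delta\in\Gamma$. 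A direct check using (i) and (ii) then confirms $\delta(K)=K$ and $\delta_K=\gamma_K$.

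The main obstacle is producing $W$, and specifically securing (iii): a naive clopen neighborhood of $L$ need not be disjoint from its $\gamma$-image. The key observation is that $L$ and $\gamma(K)$ are disjoint compact subsets of $X$ (from $L\subseteq K$, $\gamma(L)\cap K=\emptyset$, and $K\cap\gamma(K)=K_1$). By zero-dimensionality I would choose a clopen $D$ with $L\subseteq D$ and $D\cap\gamma(K)=\emptyset$, and set $W=D\setminus\gamma(D)$. Then $\gamma(L)\subseteq\gamma(K)$ is disjoint from $D$, whence $L\cap\gamma(D)=\emptyset$ and so $L\subseteq W$; also $W\subseteq D$ is disjoint from $\gamma(K)$, giving (ii); and finally $\gamma(W)\subseteq\gamma(D)$ is disjoint from $W$ by the very definition $W=D\setminus\gamma(D)$, giving (iii). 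The only remaining point needing care is $\delta^2=\mathrm{id}$, which is immediate once one records that $W\cup\gamma(W)$ and its complement are each $\gamma$-invariant, so $\delta$ restricts to an involution on both.
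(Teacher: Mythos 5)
Your proof is correct and follows essentially the same strategy as the paper: patch $\gamma$ with the identity along a clopen $\gamma$-invariant set obtained by separating disjoint compact pieces of $K$ (your $W\cup\gamma(W)$ with $W=D\setminus\gamma(D)$ plays exactly the role of the paper's $V=U\cap\gamma(U)$). The only difference is that the paper shrinks the support of $\gamma$ to an invariant clopen neighborhood of the part of $K$ it should move, whereas you delete $\gamma$ on an invariant clopen neighborhood of $K\setminus K_1$; both work, and your condition (iii) is in fact not needed since $W\cup\gamma(W)$ is automatically $\gamma$-invariant.
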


\begin{proof}
The sets $\{x \in K : \gamma(x) \in K \}$ and $\{x \in K : \gamma(x) \ne x\}$ are both clopen in $K$; denote by $L$ their intersection. Since $\gamma$ is an involution we have $\gamma(L)=L$. 

Choose a clopen subset $U$ of $X$ which contains $L$ and is disjoint from $K\setminus L$, and set $V=U \cap \gamma(U)$. Then $V$ is clopen, $V \cap K=L$ and $V= \gamma (V)$. We can then define $\delta \in \Gamma$ by setting $\delta(x)=\gamma(x)$ for all $x \in V$ and $\delta(x)=x$ for all $x \not \in V$. 

By construction, $\delta(x)=\gamma(x)$ for all $x \in L$, and $\delta(x)=x$ for all $x \in K \setminus L$, so that $\delta_K=\gamma_K$.
\end{proof}

\begin{defn}
Let $\Gamma$ be a minimal ample subgroup of $X$ and $K$ a $\Gamma$-étale subset. Let $(\mcB,\Delta)$ be a finite unit system with $\Delta \le \Gamma$.

We say that $(\mcB,\Delta)$ is \emph{$K$-compatible} if for all atoms $A$, $B$ of $\mcB$ and every $\delta \in \Delta$ such that $\delta(A)=B$, if $K \cap A$ and $K \cap B$ are both nonempty then we have $\delta(K \cap A)= K \cap B$.
\end{defn}

This extends a definition given in \cite{Melleray2023}*{Definition~3.10}, where we only considered closed sets $K$ such that for all $x\ne y \in K$ one has $y \not \in \Gamma x$, which we called $\Gamma$-sparse sets. The assumption of $K$-compatibility amounts to saying that, if $A,B$ are two elements of the same $\Delta$-orbit that both intersect $K$, then the involution in $\Delta$ which maps $A$ to $B$ and is equal to the identity outside of $A \cup B$ is $K$-compatible.

\begin{lemma}\label{l:refining_to_compatible}
Assume that $K$ is $\Gamma$-étale and let $(\mcA,\Delta)$ be a finite unit system with $\Delta \le \Gamma$. 
Then there exists $\mcA'$ refining $\mcA$ and $\Delta'$ such that $(\mcA',\Delta')$ is a $K$-compatible finite unit system and $\Delta \le \Delta' \le \Gamma$.
\end{lemma}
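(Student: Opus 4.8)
The plan is to produce the refinement in two stages: first refine the Boolean algebra to a finite $\Delta$-invariant $\mathcal{P}\supseteq\mcA$, then read off the group. For the group, once $\mathcal{P}$ is in hand I would set $\Delta'$ to be the set of all $g\in\Homeo(X)$ that are piecewise-$\Delta$ over the atoms of $\mathcal{P}$; this is a finite subgroup of $\Gamma$ (because $\Gamma$ is a full group and $\Delta\le\Gamma$), it contains $\Delta$, it leaves $\mathcal{P}$ invariant, and it satisfies the ``fullness'' clause of the definition of a unit system by construction. The remaining unit-system axioms (faithfulness of the evaluation map, and that fixed-point sets lie in $\mathcal{P}$) are obtained by a further routine finite refinement, carried out so as to preserve the $K$-adaptedness established below (by refining the partition of $K$ used in the construction accordingly). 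The real content is therefore the construction of $\mathcal{P}$.

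The key reduction is the following. Since every element of $\Delta'$ restricts on each atom to some element of $\Delta$, for atoms $A,B$ of $\mathcal{P}$ in the same $\Delta'$-orbit the relevant connecting maps are exactly the $\delta\in\Delta$ with $\delta(A)=B$; hence $(\mathcal{P},\Delta')$ is $K$-compatible as soon as, for all atoms $A,B$ of $\mathcal{P}$ and all $\delta\in\Delta$ with $\delta(A)=B$, if $A$ and $B$ both meet $K$ then $\delta(A\cap K)=B\cap K$. I would aim to arrange $\mathcal{P}$ so that the atoms meeting $K$ form a prescribed family $\{\hat P_i\}$ of pairwise disjoint clopen sets with $\hat P_i\cap K=P_i$, where $\{P_i\}$ is a clopen-in-$K$ partition of $K$ chosen $\Delta$-equivariantly. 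Writing $K_\delta=\{x\in K:\delta x\in K\}$ (clopen in $K$, since $K$ is $\Gamma$-\'etale), I want each $P_i$ entirely inside $K_\delta$ or disjoint from it, with $\delta(\hat P_i)=\hat P_{\sigma_\delta(i)}$ whenever $P_i\subseteq K_\delta$ (so $\delta$ carries $K$-trace onto $K$-trace) and $\delta(\hat P_i)$ disjoint from $K\cup\bigcup_k\hat P_k$ whenever $P_i\cap K_\delta=\emptyset$ (so no spurious identification of $K$-traces is created). A direct check then yields the required compatibility condition.

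To build $\{P_i\}$ I would take the finite Boolean subalgebra of $\Clopen(K)$ generated by all $\delta(A\cap K)\cap K$ for $\delta\in\Delta$ and $A$ an atom of $\mcA$; using $\Gamma$-\'etaleness one checks this algebra is invariant under $S\mapsto\delta(S)\cap K$ for every $\delta\in\Delta$, that each $K_\delta$ and each $A\cap K$ is a union of its atoms, and hence that its atoms $\{P_i\}$ are permuted by the partial action of $\Delta$ as required. The main obstacle is then the equivariant clopen extension: producing the $\hat P_i$ with the correct $K$-traces \emph{and} the equivariance $\delta(\hat P_i)=\hat P_{\sigma_\delta(i)}$. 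Equivariance forces $\delta(\hat P_i)\cap K=\delta(P_i)$, i.e.\ $\hat P_i\cap\delta^{-1}(K)=P_i$, so $\hat P_i$ must avoid $\delta^{-1}(K)\setminus K$ near $P_i$; this is precisely where $\Gamma$-\'etaleness is essential. Indeed, if a point $p\in P_i\subseteq K_\delta$ were a limit of points $y_n\notin K$ with $\delta(y_n)\in K$, then $\delta(y_n)\to\delta(p)\in\delta(P_i)$, and since $\delta(P_i)$ is clopen in $K$ we would get $\delta(y_n)\in\delta(P_i)$ eventually, forcing $y_n\in P_i\subseteq K$, a contradiction. Thus $P_i$ is disjoint from the closed set $\overline{\delta^{-1}(K)\setminus K}$, and disjoint compact sets in the Cantor space can be separated by a clopen set.

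Concretely, for each orbit of the partial action I would pick a representative $P$ inside its $\mcA$-atom $U$, choose a clopen $V\subseteq U$ with $V\cap K=P$ avoiding the finitely many closed sets $\overline{\delta^{-1}(K)\setminus K}$ (for $\delta$ with $P\subseteq K_\delta$) and $\delta^{-1}(K)$ (for $\delta$ with $P\cap K_\delta=\emptyset$), symmetrize $V$ over the stabilizer $\{\eta\in\Delta:\eta(P)=P\}$ to make the extension independent of the chosen group element, and transport it around the orbit; a final simultaneous shrinking (possible since the $P_i$ are pairwise disjoint compacta) makes the $\hat P_i$ pairwise disjoint and pushes the off-$K$ images $\delta(\hat P_i)$ off $\bigcup_k\hat P_k$. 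Taking $\mathcal{P}$ to be the $\Delta$-invariant algebra generated by $\mcA$, the $\hat P_i$ and their $\Delta$-translates then gives a partition whose only atoms meeting $K$ are the $\hat P_i$, which is what the reduction needs. I expect the bookkeeping in this last step (equivariance versus disjointness) to be the most delicate routine point, whereas the genuinely essential ingredient is the \'etale separation argument above.
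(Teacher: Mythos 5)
Your argument is correct, and it relies on the same two essential ingredients as the paper's proof --- \'etaleness to get clopen traces on $K$, and separation of disjoint closed subsets of the Cantor space by clopen sets --- but it is organized in a genuinely different way. The paper works one connecting involution at a time: for each pair $U\ne V$ of atoms of $\mcA$ in the same $\Delta$-orbit, with $\delta$ the involution exchanging them, it cuts $U$ into a clopen piece $U_1$ whose trace on $K$ is exactly $\{x\in U\cap K:\delta x\in K\}$ and which satisfies $\delta(U_1\cap K)=\delta(U_1)\cap K$, a clopen piece carrying the rest of $U\cap K$ whose $\delta$-image misses $K$, and a piece missing $K$; it then takes a common $\Delta$-invariant refinement of all these cuts. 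The point is that the trace identity $\delta(A\cap K)=\delta(A)\cap K$ is inherited by every clopen $A\subseteq U_1$, so no exact equivariance of the clopen pieces is ever required and the final common refinement costs nothing. Your route instead constructs a single globally $\Delta$-equivariant family $\hat P_i$ of clopen extensions of a canonical partition of $K$, with $\delta(\hat P_i)=\hat P_{\sigma_\delta(i)}$ on the nose; this is exactly what forces the extra work (the separation of $P_i$ from $\overline{\delta^{-1}(K)\setminus K}$, the symmetrization over stabilizers, the simultaneous equivariant shrinking), all of which does go through because there are only finitely many constraints, each of the form ``make $\hat P_i$ avoid a closed set disjoint from the compact set $P_i$'', and intersecting over the stabilizer only shrinks the extension while preserving its trace. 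What you gain is a cleaner structural picture (the atoms of $\mcA'$ meeting $K$ are honestly permuted by $\Delta$, not merely trace-compatibly); what the paper's pair-by-pair version buys is the complete avoidance of the equivariance bookkeeping. One small simplification: once $\mathcal{P}$ is $\Delta$-invariant, your $\Delta'$ (all piecewise-$\Delta$ maps over atoms of $\mathcal{P}$) already satisfies every unit-system axiom --- a bijection sending each of the finitely many atoms to a union of atoms must permute them, fixed-point sets are unions over atoms of sets $A\cap\{x:\delta_A x=x\}\in\mathcal{P}$, and faithfulness follows because an element of $\Delta$ mapping a nonempty clopen subset of an $\mcA$-atom to itself must fix that whole atom pointwise --- so the ``further routine refinement'' you invoke for those axioms is not actually needed.
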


Since any two finite unit systems $(\mcA,\Delta)$ and $(\mcB,\Lambda)$ with $\Delta, \Lambda \le \Gamma$ have a common refinement $(\mcC,\Sigma)$ with $\Sigma \le \Gamma$, it follows from Lemma \ref{l:refining_to_compatible} and the existence of an exhaustive sequence of finite unit systems for $\Gamma$ that if $K$ is $\Gamma$-étale then there exists an exhaustive sequence of $K$-compatible finite unit systems for $\Gamma$.

\begin{proof}
The argument proceeds by cutting the $\Delta$-orbit of each atom of $\mcA$. First, fix some such orbit $\tau$.

Let $U \ne V$ be two elements of $\tau$ and $\delta \in \Delta$ the involution such that $\delta U=V$ and $\delta(x)=x$ for every $x \not \in U \sqcup V$. We first consider $K_1(\delta) = \{x \in K \cap U : \delta(x) \in K\}$, which is clopen in $K$; we pick a clopen set $U' \subseteq U$ such that $U' \cap K=K_1(\delta)$. Then $\delta(K_1(\delta))=\delta(U' \cap K)$ is clopen in $K$ and contained in $V$, so we can find a clopen subset $V'$ of $V$ such that $V' \cap K= \delta(K_1(\delta))$. Set $U_1 = U' \cap \delta V'$, $V_1= \delta (U_1)$.

By definition, $K_1(\delta)= U_1 \cap K$ and $\delta(U_1 \cap K)= \delta(K_1(\delta))=V_1 \cap K$.

If $(U \setminus U_1) \cap K= \emptyset$ or $(V \setminus V_1) \cap K= \emptyset$ we let $U_2=U \setminus U_1$, $V_2=\delta(U_2)$ and stop partitioning $U$, $V$. Else, let $U''=U \setminus U_1$, $V''=V \setminus V_1$. Then $\delta(U'' \cap K) \cap K=\emptyset$. Since these two sets are closed, we can find a clopen set $U_2 \subset U''$ containing $U'' \cap K$ and such that $\delta(U_2) \cap K= \emptyset$, and set $V_2= \delta(U_2)$. Finally we define $U_3= U \setminus (U_1 \sqcup U_2)$ and $V_3=\delta(U_3)$.

We now have $\delta(U_1 \cap K)= V_1 \cap K$; $V_2=\delta(U_2)$ and $V_2 \cap K=\emptyset$; $V_3=\delta(U_3)$ and $U_3 \cap K=\emptyset$.

We do this for every pair of distinct $U,V$ in $\tau$ and choose a finite $\Delta$-invariant partition of $\tau$ refining all those partitions. We obtained our desired refinement of $\tau$. Doing this for every $\tau$ gives us $\mcA'$. We then let $\Delta'$ be the group of all homeomorphisms of $X$ which preserve $\mcA'$ and coincide on each atom $U$ of $\mcA$ with some $\delta_U \in \Delta$.
\end{proof}

Given a $\Gamma$-étale $K \subset X$, we denote by $\Gamma_K$ the smallest full subgroup of $\Homeo(K)$ which contains $\gamma_K$ for every involution $\gamma \in \Gamma$. For every $x \in X$ and every $\gamma \in \Gamma$ there exists an involution $\delta \in \Gamma$ such that $\delta(x)=\gamma(x)$; hence the restriction of $R_\Gamma$ to some malleable $K$ is induced by $\Gamma_K$.

The group $\Gamma_K$ can equivalently be described as follows.

\begin{lemma}\label{l:induced_group_is_what_you_expect}
Assume that $K$ is $\Gamma$-étale. Then $\Gamma_K =\{\gamma_{|K} : \gamma \in \Gamma \textrm{ and } \gamma(K)=K\}$. In particular, $\Gamma_K$ is an ample group over $K$.
\end{lemma}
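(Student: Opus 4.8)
The plan is to denote by $G=\{\gamma_{|K}\colon \gamma\in\Gamma,\ \gamma(K)=K\}$ the right-hand side, to observe that $G$ is a subgroup of $\Homeo(K)$ (restriction to $K$ is a group homomorphism on the subgroup $\{\gamma\in\Gamma\colon\gamma(K)=K\}$), and then to prove $\Gamma_K=G$ by establishing the two inclusions, deducing ampleness at the end.

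For $\Gamma_K\subseteq G$: since $\Gamma_K$ is by definition the smallest full subgroup of $\Homeo(K)$ containing $\gamma_K$ for every involution $\gamma\in\Gamma$, it suffices to check that $G$ contains every such $\gamma_K$ and that $G$ is full. The first point is exactly the (unlabeled) lemma proved just above, which produces a $K$-compatible $\delta\in\Gamma$ (so $\delta(K)=K$) with $\delta_K=\gamma_K$; since $\delta(K)=K$ we have $\delta_{|K}=\delta_K=\gamma_K$, whence $\gamma_K\in G$. For fullness, take $g\in\Homeo(K)$ with a clopen partition $K=\bigsqcup_i U_i$ such that $g_{|U_i}=(\gamma_i)_{|U_i}$ for some $\gamma_i\in\Gamma$ preserving $K$; I would lift this to $X$. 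Choose pairwise disjoint clopen sets $\tilde U_i,\tilde V_i\subseteq X$ with $\tilde U_i\cap K=U_i$ and $\tilde V_i\cap K=V_i:=g(U_i)$, and set $W_i=\gamma_i^{-1}(\tilde V_i)\cap\tilde U_i$, so that $W_i\cap K=U_i$, $\gamma_i(W_i)\cap K=V_i$, and the sets $\gamma_i(W_i)\subseteq\tilde V_i$ are pairwise disjoint. Both $\bigsqcup_i W_i$ and $\bigsqcup_i\gamma_i(W_i)$ then contain $K$, and by Lemma~\ref{l:equidecomposability_on_clopens_is_induced_by_full_group} (applied with $A=B=X$) their complements are $\sim_\Gamma$-equidecomposable, giving $\beta\in\Gamma$ matching them. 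Gluing $\gamma_i$ on $W_i$ with $\beta$ on the complement yields, by fullness of $\Gamma$, an element $\gamma\in\Gamma$ with $\gamma(K)=K$ and $\gamma_{|K}=g$, so $g\in G$.

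For $G\subseteq\Gamma_K$: fix $\gamma\in\Gamma$ with $\gamma(K)=K$ and aim to show that $\gamma_{|K}$ coincides, on a clopen partition of $K$, with generators of $\Gamma_K$, so that fullness of $\Gamma_K$ concludes. The fixed-point set $F=\{x\colon\gamma(x)=x\}$ is clopen because $\Gamma$ is ample; on $X\setminus F$ one has $\gamma(x)\neq x$, so by continuity and compactness I can refine a finite unit system so that every atom $A\subseteq X\setminus F$ satisfies $\gamma(A)\cap A=\emptyset$, and then, using Lemma~\ref{l:refining_to_compatible}, arrange the unit system to be $K$-compatible with $F$ an element of the algebra. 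For an atom $A$ meeting $K$ with $A\subseteq X\setminus F$, set $B=\gamma(A)\neq A$; $K$-compatibility gives $\gamma(K\cap A)=K\cap B$ (so $B$ meets $K$ too), and the involution swapping $A$ and $B$ via $\gamma,\gamma^{-1}$ and fixing everything else is a $K$-compatible involution of $\Gamma$ whose $K$-map is a generator of $\Gamma_K$ agreeing with $\gamma$ on $K\cap A$; on atoms contained in $F$, $\gamma$ is the identity. Thus $\gamma_{|K}$ is piecewise a generator of $\Gamma_K$, hence lies in $\Gamma_K$.

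Finally, ampleness of $\Gamma_K=G$ is routine: $G$ is a full group; it is countable and locally finite as the restriction image of the subgroup $\{\gamma\in\Gamma\colon\gamma(K)=K\}$ of $\Gamma$; and for $\gamma_{|K}\in G$ the fixed set $\{x\in K\colon\gamma(x)=x\}=K\cap\{x\colon\gamma(x)=x\}$ is clopen in $K$. I expect the main obstacle to be the fullness of $G$ required for $\Gamma_K\subseteq G$ — reconstructing a global element of $\Gamma$ from a piecewise description on $K$ — which is exactly where the equidecomposability lemma does the real work; the other delicate point is handling atoms fixed setwise (but not pointwise) by $\gamma$ in the reverse inclusion, which I resolve by refining the unit system so that $\gamma$ displaces every non-fixed atom off itself.
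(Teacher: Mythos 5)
Your proof is correct, and it takes a genuinely different route from the paper's, most visibly for the inclusion $\Gamma_K\subseteq G$. The paper does not argue via fullness of $G$ at all: it fixes an exhaustive sequence of $K$-compatible finite unit systems, encloses the finitely many witnessing $K$-compatible involutions in a finite group $\Delta$, lifts a $\Delta$-invariant finite subalgebra of $\Clopen(K)$ to a $\Delta$-invariant subalgebra of $\Clopen(X)$, and then, for $n$ large, assembles a single global $\gamma\in\Gamma$ atom-by-atom using the uniqueness (forced by injectivity of the evaluation map of a unit system) of the element of $\Delta$ realizing $g$ on each atom meeting $K$. Your reduction to ``$G$ contains the generators $\gamma_K$ and $G$ is full'' is cleaner conceptually, and your fullness argument --- separating the pieces $U_i$, $g(U_i)$ by disjoint clopen sets of $X$, shrinking to $W_i=\gamma_i^{-1}(\tilde V_i)\cap\tilde U_i$, and invoking Lemma~\ref{l:equidecomposability_on_clopens_is_induced_by_full_group} with $A=B=X$ to match the complements --- is a correct and rather more elementary lifting device; it avoids the unit-system machinery entirely in this direction. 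For the reverse inclusion the two arguments are closer in spirit (both reduce $\gamma_{|K}$ to involutions supported on pairs of atoms), but your preliminary step of splitting off the clopen fixed-point set $F$ and refining so that every atom outside $F$ is moved off itself is a genuine improvement in precision: the paper's appeal to ``the unique involution $\gamma_U$ swapping $U$ and $\gamma(U)$'' is slightly delicate when $\gamma(U)=U$, a case your setup excludes by construction. One cosmetic remark: in that step you do not really need $K$-compatibility of the refined unit system, since $\gamma(K\cap A)=K\cap\gamma(A)$ already follows from $\gamma(K)=K$, which is all that is needed to see that the swapping involution preserves $K$; invoking Lemma~\ref{l:refining_to_compatible} there is harmless but superfluous. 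The ampleness argument at the end matches what the paper leaves implicit.
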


\begin{proof}
Fix an exhaustive sequence $(\mcA_n,\Gamma_n)$ of $K$-compatible finite unit systems for $\Gamma$ (see the remark following the statement of Lemma \ref{l:refining_to_compatible}).

Pick $g \in \Gamma_K$. There exists a clopen partition $(U_i)_{i=1,\ldots,p}$ of $K$ such that for each $i$ the restriction of $g$ to $U_i$ coincides on $U_i$ with a product of $K$-compatible involutions. Let $A$ be a finite set of $K$-compatible involutions witnessing that fact, and define $\Delta=\langle A\rangle$ (a finite subgroup of $\Gamma$); note that $\delta(K)=K$ for all $\delta \in \Delta$, in particular restrictions of elements of $\Delta$ to $K$ form a finite subgroup of $\Homeo(K)$. 

Next, pick a finite subalgebra $\mcB$ of $\Clopen(K)$ containing every $U_i$ and such that $\delta(B) \in \mcB$ for all $\delta \in \Delta$.
Choose a Boolean subalgebra $\tilde \mcB$ of $\Clopen(X)$ such that $\{ \tilde B \cap K : \tilde B \in \tilde \mcB \}=\mcB$ and $\tilde \mcB$ is $\Delta$-invariant. For $n$ large enough we have that $\mcA_n$ refines $\tilde \mcB$ and $\Gamma_n$ contains $\Delta$. For every atom $U$ of $\mcA_n$ which intersects $K$, there exists $\delta \in \Delta$ such that $\delta$ and $g$ coincide on $U \cap K$. We must then have $\delta(U \cap K)= g(U \cap K)$, and any two such $\delta$ coincide on $U$ since $\Delta$ is contained in $\Gamma_n$ and $(\mcA_n,\Gamma_n)$ is a unit system. 

Denoting this $\delta$ by $\delta_U$, we define $\gamma \in \Homeo(X)$ by setting $\gamma(x)=\delta_U(x)$ for every atom $U$ of $\mcA_n$ which intersects $K$ and every $x \in U$, $\gamma(x)=x$ elsewhere. By definition we have both that $\gamma \in \Gamma$ and $\gamma_{|K}=g$.

Conversely, assume that $\gamma \in \Gamma$ is such that $\gamma(K)=K$. Then find $n \in \N$ such that $\gamma \in \Gamma_n$. Since $(\mcA_n,\Gamma_n)$ is $K$-compatible, for each atom $U$ of $\mcA_n$ such that $U \cap K \ne \emptyset$ there exists a unique involution $\gamma_U$ in $\Gamma_n$ such that $\gamma_U(U \cap K)= \gamma(U \cap K)$ and $\gamma_U(x)=x$ for all $x \not \in U \cup \gamma(U)$. By definition of a unit system, $\gamma$ and $\gamma_U$ coincide on $U$, and $\gamma_U$ is $K$-compatible. Let $\lambda$ be the homeomorphism of $K$ which coincides with $\gamma_U$ on each $U \cap K$ whenever $U \cap K \ne \emptyset$ and coincides with the identity everywhere else. Then $\lambda \in \Gamma_K$ and $\lambda=\gamma_{|K}$.
\end{proof}

\begin{defn}
Assume that $K$ is $\Gamma$-étale and that  $(\mcA,\Lambda)$ is a $K$-compatible finite unit system with $\Lambda \le \Gamma$. We let $\mcA_K$ denote the Boolean subalgebra of $\Clopen(K)$ induced by $\mcA$, and $\Lambda_K=\{\lambda_{|K} : \lambda\in \Lambda \text{ and } \lambda(K)=K\}$. 

Note that by $K$-compatibility we have that $(\mcA_K,\Lambda_K)$ is a finite unit system in $K$.
\end{defn}

\begin{lemma}
Assume that $(\mcA_n,\Gamma_n)$ is an exhaustive sequence of $K$-compatible finite unit systems for $\Gamma$. Then $(\mcA_{n,K},\Gamma_{n,K})$ is an exhaustive sequence of finite unit systems for $\Gamma_K$.
\end{lemma}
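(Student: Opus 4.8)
The plan is to verify the three defining properties of an exhaustive sequence of finite unit systems in turn: that each $(\mcA_{n,K},\Gamma_{n,K})$ is a finite unit system in $K$; that $(\mcA_{n+1,K},\Gamma_{n+1,K})$ refines $(\mcA_{n,K},\Gamma_{n,K})$; and that $\bigcup_n \mcA_{n,K}=\Clopen(K)$ together with $\bigcup_n \Gamma_{n,K}=\Gamma_K$. The first of these comes for free: since each $(\mcA_n,\Gamma_n)$ is a $K$-compatible finite unit system, the definition immediately preceding this lemma already guarantees that $(\mcA_{n,K},\Gamma_{n,K})$ is a finite unit system in $K$.

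For the refinement property I would argue directly from the inclusions $\mcA_n\subseteq\mcA_{n+1}$ and $\Gamma_n\subseteq\Gamma_{n+1}$. Restriction to $K$ clearly preserves inclusion of Boolean subalgebras, so $\mcA_{n,K}\subseteq\mcA_{n+1,K}$. Likewise, if $\gamma\in\Gamma_n$ satisfies $\gamma(K)=K$ then the same $\gamma$, viewed in $\Gamma_{n+1}$, still satisfies $\gamma(K)=K$, whence $\gamma_{|K}\in\Gamma_{n+1,K}$; thus $\Gamma_{n,K}\subseteq\Gamma_{n+1,K}$.

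The exhaustiveness of the algebras is where I would invoke the standing fact recorded at the start of Section~3, that every clopen subset of $K$ is of the form $A\cap K$ for some $A\in\Clopen(X)$. Given a clopen subset of $K$, I pick such an $A$; since $\bigcup_n\mcA_n=\Clopen(X)$ we have $A\in\mcA_n$ for some $n$, and then $A\cap K\in\mcA_{n,K}$. This yields $\bigcup_n\mcA_{n,K}=\Clopen(K)$.

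The one step that uses more than bookkeeping, and that I expect to be the crux, is the identity $\bigcup_n\Gamma_{n,K}=\Gamma_K$. The inclusion $\bigcup_n\Gamma_{n,K}\subseteq\Gamma_K$ is immediate, because each $\gamma\in\Gamma_n$ with $\gamma(K)=K$ lies in $\Gamma$, so $\gamma_{|K}\in\Gamma_K$ by Lemma~\ref{l:induced_group_is_what_you_expect}. For the reverse inclusion I would start from an arbitrary $g\in\Gamma_K$ and use precisely the characterization furnished by that lemma, namely $\Gamma_K=\{\gamma_{|K}\colon\gamma\in\Gamma,\ \gamma(K)=K\}$, to write $g=\gamma_{|K}$ for some $\gamma\in\Gamma$ fixing $K$ setwise. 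Then $\bigcup_n\Gamma_n=\Gamma$ produces an $n$ with $\gamma\in\Gamma_n$, and since $\gamma(K)=K$ this gives $g=\gamma_{|K}\in\Gamma_{n,K}$. It is worth emphasizing that this is exactly the point where the less obvious ``restrictions of $K$-preserving elements'' description of $\Gamma_K$ is essential: the original definition of $\Gamma_K$ as the full group generated by the involutions $\gamma_K$ would not directly hand us a single $K$-preserving $\gamma\in\Gamma$ living in some finite $\Gamma_n$, so Lemma~\ref{l:induced_group_is_what_you_expect} is doing the real work here.
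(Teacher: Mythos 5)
Your proposal is correct and follows essentially the same route as the paper: the exhaustion of $\Clopen(K)$ is the same trace argument the paper dismisses as clear, and the key step $\bigcup_n\Gamma_{n,K}=\Gamma_K$ is obtained, exactly as in the paper, by invoking Lemma~\ref{l:induced_group_is_what_you_expect} to realize an arbitrary $g\in\Gamma_K$ as $\gamma_{|K}$ for a single $K$-preserving $\gamma\in\Gamma$ and then locating $\gamma$ in some $\Gamma_n$. Your additional verifications (that each $(\mcA_{n,K},\Gamma_{n,K})$ is a finite unit system and that the sequence is nested) are correct bookkeeping that the paper leaves implicit.
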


\begin{proof}
Clearly $\bigcup_n \mcA_{n,K}=\Clopen(K)$.
Given $\sigma \in \Gamma_K$, by Lemma \ref{l:induced_group_is_what_you_expect} there exists $\gamma \in \Gamma$ such that $\sigma=\gamma_{|K}$, whence $\sigma \in \Gamma_{n,K}$ for $n$ large enough.
\end{proof}

\section{A further strengthening of Krieger's theorem}\label{s:Krieger}

Our aim now is to establish the following version of Krieger's theorem. This is a close cousin of the ``Fundamental Lemma'' \cite{Giordano2004}*{Lemma~4.15} (each statement follows readily from the other, though the proofs are completely different) and a further strengthening of the version established in \cite{Melleray2023}*{Theorem~3.11}

\begin{theorem}\label{t:ultimate_Krieger}
Let $\Gamma$, $\Lambda$ be two minimal ample groups over $X$, and $K$ (resp. $L$) be a $\Gamma$-malleable (resp. $\Lambda$-malleable) subset of $X$. Assume also that $\Gamma$, $\Lambda$ induce isomorphic relations on $\Clopen(X)$.

Then every homeomorphism $h \colon K \to L$ such that $h\Gamma_K h^{-1} = \Lambda_L$ extends to a homeomorphism of $X$ such that $h \Gamma h^{-1}= \Lambda$.
\end{theorem}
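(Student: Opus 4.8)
The plan is to build the desired extension $H$ of $h$ as the limit of a back-and-forth construction over compatible exhaustive sequences, threading the constraint on $K$ through Lemma~\ref{l:compatible_clopen}. First I would fix, using the remark following Lemma~\ref{l:refining_to_compatible}, an exhaustive sequence $(\mcA_n,\Gamma_n)$ of $K$-compatible finite unit systems for $\Gamma$ and an exhaustive sequence $(\mcB_n,\Lambda_n)$ of $L$-compatible finite unit systems for $\Lambda$; by the lemma identifying induced exhaustive sequences, $(\mcA_{n,K},\Gamma_{n,K})$ and $(\mcB_{n,L},\Lambda_{n,L})$ are then exhaustive sequences for $\Gamma_K$ and $\Lambda_L$. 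Since $h\Gamma_K h^{-1}=\Lambda_L$, the homeomorphism $h$ carries the relation $\sim_{\Gamma_K}$ on $\Clopen(K)$ to the relation $\sim_{\Lambda_L}$ on $\Clopen(L)$, while the hypothesis that $\Gamma,\Lambda$ induce isomorphic relations on $\Clopen(X)$ identifies the global comparison data of the two groups (equivalently, via Lemma~\ref{l:Glasner_Weiss}, their invariant-measure simplices).

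The construction maintains, at each stage $n$, finite clopen partitions $\mathcal{R}_n$ of $X$ refining $\mcA_n$ and $\mathcal{S}_n$ of $X$ refining $\mcB_n$, together with a bijection $\phi_n$ from the atoms of $\mathcal{R}_n$ to the atoms of $\mathcal{S}_n$ satisfying two invariants: first, that $\phi_n$ is an isomorphism of the finite orbit structures, respecting $\sim$ and the witnessing group elements, so that in the limit $H$ will conjugate $\Gamma$ to $\Lambda$; and second, that for every atom $U$ of $\mathcal{R}_n$ meeting $K$ one has $h(U\cap K)=\phi_n(U)\cap L$, so that in the limit $H$ restricts to $h$ on $K$. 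I would require moreover that $\mathcal{R}_{n+1},\mathcal{S}_{n+1}$ refine $\mathcal{R}_n,\mathcal{S}_n$ and that $\phi_{n+1}$ refine $\phi_n$, mapping the subdivision of each atom $U$ into $\phi_n(U)$. Because $\bigcup_n\mcA_n=\bigcup_n\mcB_n=\Clopen(X)$, alternately refining on the $\Gamma$-side and the $\Lambda$-side guarantees that the partitions generate all of $\Clopen(X)$ on both sides, so the nested matchings assemble into a single homeomorphism $H\colon X\to X$; the first invariant then yields $H\Gamma H^{-1}=\Lambda$ via the defining closure property of unit systems, and the second yields $H_{|K}=h$.

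The crux is the inductive step, where, having refined (say) the $\Gamma$-side partition by intersecting with $\mcA_{n+1}$, I must produce a matching refinement of each block $\phi_n(U)$ on the $\Lambda$-side realizing simultaneously the prescribed orbit structure and the prescribed trace on $L$. For a sub-atom meeting $K$, the target trace on $L$ is dictated by $h$, and it is consistent with the $\Lambda_L$-orbit structure precisely because $h\Gamma_K h^{-1}=\Lambda_L$; I would realize such a piece by appealing to Lemma~\ref{l:compatible_clopen}, which produces a clopen set with a prescribed trace on the malleable set, inside a prescribed target, and in a prescribed $\sim_\Lambda$-class. This is exactly where $\Gamma$-thinness and $\Lambda$-thinness of $K$ and $L$ are essential: since the malleable sets carry no invariant-measure mass, the measure inequalities required by Lemmas~\ref{l:Glasner_Weiss} and~\ref{l:compatible_clopen} can always be arranged, leaving enough room to reconcile the rigid combinatorial constraint coming from $h$ on the thin set with the measure-governed matching on the bulk. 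The main obstacle, and the only genuinely delicate bookkeeping, is to carry out this reconciliation orbit by orbit so that the first invariant is preserved exactly while the $L$-traces are forced by $h$; once the single-block statement is organized as an application of Lemma~\ref{l:compatible_clopen} together with Glasner--Weiss comparison, iterating over the finitely many atoms and alternating sides completes the step.
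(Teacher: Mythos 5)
Your proposal follows essentially the same route as the paper: a back-and-forth over $K$- and $L$-compatible exhaustive sequences of finite unit systems, maintaining exactly the paper's notion of $h$-compatible isomorphism (orbit-structure conjugacy, $\sim$-equivalence of matched atoms, and the trace condition $\Phi(A)\cap L=h(A\cap K)$) as the inductive invariant, with Lemma~\ref{l:compatible_clopen} supplying the matched pieces in the refinement step. The only detail worth flagging inside the ``bookkeeping'' you defer is that the last fragment of each orbit representative is forced by complementation, so its $\sim$-equivalence to its preimage must be extracted from Lemma~\ref{l:equidecomposability_on_clopens_is_induced_by_full_group} rather than chosen via Lemma~\ref{l:compatible_clopen}.
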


For the remainder of this section, we fix two minimal ample groups $\Gamma$, $\Lambda$ which induce isomorphic relations on $\Clopen(X)$; by conjugating $\Lambda$ if necessary, we reduce to the case where $\sim_\Gamma$ and $\sim_\Lambda$ coincide, and we denote this relation by $\sim$. Explicitly, for any two clopen subsets $U,V$ of $X$ we have
\[ \left(\exists \gamma \in \Gamma \ \gamma U=V \right) \Leftrightarrow (U \sim V) \Leftrightarrow \left(\exists \lambda \in \Lambda \ \lambda U=V \right) .\]

We also fix a $\Gamma$-malleable subset $K$ of $X$, a $\Lambda$-malleable subset $L$ of $X$, and $h \colon K \to L$ a homeomorphism such that $h \Gamma_K h^{-1}=\Lambda_L$.

\begin{defn}
Assume that $\Delta \le \Gamma$ and $(\mcA,\Delta)$ is a $K$-compatible finite unit system, $\Sigma \le \Lambda$ and $(\mcB,\Sigma)$ is a $L$-compatible finite unit system.  

A Boolean algebra isomorphism $\Phi \colon \mcA \to \mcB$ is said to be $h$-\emph{compatible} if:
\begin{itemize}
\item $\Phi(A) \sim A$ for every $A \in \mcA$.
\item $\Phi \Delta_{\mcA} \Phi^{-1}= \Sigma_\mcB$ (we then say that $\Phi$ conjugates $(\mcA,\Delta)$ on $(\mcB,\Sigma)$).
\item For every $A \in \mcA$ we have $\Phi(A) \cap L= h(A \cap K)$.
\end{itemize}
\end{defn}

In the second bullet point above, recall that $\Delta_\mcA$ is the subgroup of $\mathrm{Aut}(\mcA)$  induced by the action of $\Delta$ (and similarly for $\Sigma_\mcB$).

The proof of Theorem \ref{t:ultimate_Krieger} goes through a back-and-forth argument. To make this argument work, it is enough to establish the following lemma (the proof of which is essentially the same as in \cite{Melleray2023}*{Lemma ~3.17} once one has Lemma \ref{l:compatible_clopen} in hand, though we repeat it here for the sake of completeness).

\begin{lemma}\label{l:Krieger_construction}
Assume that $(\mcA,\Delta)$, $(\mcB,\Sigma)$ are respectively $K$- and $L$-compatible finite unit systems with $\Delta \le \Gamma$, $\Sigma \le \Lambda$, and $\Phi \colon \mcA \to \mcB$ is a $h$-compatible Boolean algebra isomorphism. 

Let $(\mcA',\Delta')$ be a $K$-compatible finite unit system refining $(\mcA,\Delta)$ with $\Delta' \le \Gamma$. 

Then one can find a $L$-compatible finite unit system $(\mcB',\Sigma')$ refining $(\mcB,\Sigma)$, with $\Sigma' \le \Lambda$ and a $h$-compatible isomorphism $\Phi' \colon \mcA' \to \mcB'$ which extends $\Phi$.
\end{lemma}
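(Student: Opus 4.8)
The plan is to prove this one step of the back-and-forth argument by refining the partition $\mcB$ on the $\Lambda$-side so that it matches the combinatorial structure of the given refinement $\mcA'$ on the $\Gamma$-side, all while respecting both the equidecomposability relation $\sim$ and the homeomorphism $h$ on the malleable sets. The key point is that $\Phi$ already conjugates $(\mcA,\Delta)$ onto $(\mcB,\Sigma)$, so I have a dictionary between atoms of $\mcA$ and atoms of $\mcB$; I need to extend this dictionary to the finer atoms of $\mcA'$. Because $\mcA'$ refines $\mcA$, each atom $A$ of $\mcA$ is partitioned into finitely many atoms of $\mcA'$, and I must produce a matching partition of $\Phi(A)$ into clopen pieces of the correct measure (i.e.\ $\sim$-equivalent to the corresponding $\mcA'$-pieces) and compatible with $h$ on $L$.

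\textbf{Working one $\Delta$-orbit at a time.}
First I would fix a $\Delta$-orbit $\tau$ of atoms of $\mcA$ and a representative atom $A_0 \in \tau$. The refinement $\mcA'$ cuts $A_0$ into atoms $A_0^1,\dots,A_0^k$; because $(\mcA',\Delta')$ is $\Delta'$-invariant and refines $(\mcA,\Delta)$, once I decide how to subdivide the single representative $\Phi(A_0)$, the subdivisions of the other atoms $\Phi(A)$ in the $\Sigma$-orbit are forced by transporting along $\Sigma$ (using the conjugation $\Phi\Delta_\mcA\Phi^{-1}=\Sigma_\mcB$). So the real work is to subdivide $\Phi(A_0)$ correctly. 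For each piece $A_0^j$ I need a clopen $B_0^j \subseteq \Phi(A_0)$ with $B_0^j \sim A_0^j$, with the $B_0^j$ partitioning $\Phi(A_0)$, and crucially with $B_0^j \cap L = h(A_0^j \cap K)$ so that $h$-compatibility is preserved. This is exactly the situation handled by Lemma~\ref{l:compatible_clopen}: I am prescribing the trace of a clopen set on a thin malleable set and simultaneously controlling its $\sim$-class inside a larger clopen set.

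\textbf{Applying the malleable-set machinery.}
For each target piece, I set $A = h(A_0^j \cap K)$ (a clopen subset of $L$, using that $h$ is a homeomorphism $K \to L$ and $\Phi(A_0)\cap L = h(A_0\cap K)$), take $U = A_0^j$ on the $\Gamma$-side as the prescribed $\sim$-class, and take $V = \Phi(A_0)$ minus the pieces already allocated. Since $A_0^j \sim$ (a piece of $\Phi(A_0)$) by the measure bookkeeping and $\Phi(A_0)\sim A_0$, the hypothesis $\mu(U)<\mu(V)$ needed for Lemma~\ref{l:compatible_clopen} will hold provided I peel off the pieces in an order that always leaves strictly more room than needed, reserving the last piece to absorb the remainder via the fullness of $\sim$ (Lemma~\ref{l:equidecomposability_on_clopens_is_induced_by_full_group}). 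Lemma~\ref{l:compatible_clopen} then hands me a clopen $B_0^j \subseteq V$ with $B_0^j \cap L = h(A_0^j \cap K)$ and $B_0^j \sim A_0^j$. I repeat for $j=1,\dots,k-1$ and let $B_0^k$ be the leftover, checking its trace on $L$ is correct and its $\sim$-class is right by subtraction.

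\textbf{Assembling and verifying.}
Having subdivided each representative, I transport across $\Sigma$-orbits to define $\mcB'$, then define $\Sigma'$ exactly as $\Delta'$ was defined from $\Delta$: the group of homeomorphisms preserving $\mcB'$ and coinciding on each atom of $\mcB$ with an element of $\Sigma$. One checks $(\mcB',\Sigma')$ is an $L$-compatible finite unit system with $\Sigma'\le\Lambda$ (this uses the last unit-system axiom and that $\sim_\Gamma=\sim_\Lambda$), and defines $\Phi'$ on atoms of $\mcA'$ by $\Phi'(A_0^j)=B_0^j$, extended $\Sigma$-equivariantly and then to all of $\mcA'$ by taking unions. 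The three bullet points of $h$-compatibility hold by construction. \textbf{The main obstacle} I anticipate is the simultaneous bookkeeping: ensuring that the pieces $B_0^j$ can be carved out of $\Phi(A_0)$ with the right measures \emph{and} the right traces on $L$ \emph{and} so that they genuinely partition $\Phi(A_0)$ with nothing left over and no overlap — this is where the strict-inequality hypothesis of Lemma~\ref{l:compatible_clopen} must be verified carefully at each step, and where the fullness of $\sim$ is needed to close up the final piece exactly rather than approximately.
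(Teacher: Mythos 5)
Your construction of $\mcB'$ and of $\Phi'$ on atoms follows the paper's proof closely: choose a representative of each $\Delta$-orbit (the paper additionally insists the representative meet $K$ whenever the orbit does), carve out pieces $U(C)\sim C$ with prescribed trace $h(C\cap K)$ on $L$ one at a time via Lemma \ref{l:compatible_clopen}, let the last piece be the leftover and recover its $\sim$-class from Lemma \ref{l:equidecomposability_on_clopens_is_induced_by_full_group}, then transport along $\Sigma$. That part is sound.

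The gap is in your definition of $\Sigma'$. You take ``the group of homeomorphisms preserving $\mcB'$ and coinciding on each atom of $\mcB$ with an element of $\Sigma$'', mirroring the construction at the end of Lemma \ref{l:refining_to_compatible}; but in the present lemma $(\mcA',\Delta')$ is an \emph{arbitrary} refinement, and $\Delta'$ will in general contain elements that merge distinct $\Delta$-orbits of atoms of $\mcA$ (it must, since in the back-and-forth the groups $\Delta'$ eventually exhaust $\Gamma$). An element of $\Delta'$ sending an atom of $\mcA'$ inside $A$ to an atom inside $A'$ with $A'\notin\Delta A$ does not coincide on $A$ with any element of $\Delta$, so your $\Sigma'$ contains no counterpart for it and $\Phi'$ fails to conjugate $\Delta'_{\mcA'}$ onto $\Sigma'_{\mcB'}$ (take $\Delta$ trivial and $\Delta'$ a nontrivial permutation of the atoms of $\mcA=\mcA'$ for a concrete failure). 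The paper instead adjoins to $\Sigma$ new involutions $\lambda(\tau,\pi)\in\Lambda$ linking, for each $\Delta'$-orbit $\pi$ of atoms of $\mcA'$, the $\Phi'$-images of representatives of the $\Delta$-orbits $\tau\subseteq\pi$; their existence in $\Lambda$ uses $\sim_\Gamma=\sim_\Lambda$ together with $U(C)\sim C$, their $L$-compatibility requires one more appeal to Lemma \ref{l:compatible_clopen}, and one must then verify both that $(\mcB',\Sigma')$ is still a unit system (only one new link is added between any two $\Sigma$-orbits) and that it is $L$-compatible --- a computation your ``one checks'' does not cover. This is the substantive content of the step, not bookkeeping.
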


\begin{proof}
For every orbit $\rho$ of the action of $\Delta$ on the atoms of $\mcA$, we choose a representative $A_\rho$. If $\rho$ intersects $K$, we choose $A_\rho$ so that $A_\rho \cap K \ne \emptyset$. 

For every $A \in \rho$, we denote by $\delta(\rho,A)$ the element of $\Delta$ which maps $A$ to $A_\rho$, $A_\rho$ to $A$, and is the identity everywhere else. This is an involution (and it is uniquely defined by definition of a unit system); in the particular case where $A=A_\rho$ we have $\delta(\rho,A_\rho)=\mathrm{id}$. Similarly, we denote $\sigma(\rho,A)$ the involution of $\Sigma$ exchanging $\Phi(A)$ and $\Phi(A_\rho)$ and which is the identity everywhere else.

For every $\rho$ we have 
\[A_\rho= \bigsqcup_{C \in \text{ atoms}(\mcA') : C \subseteq A_\rho} C\]

Let $C_1,\ldots,C_q$ denote the atoms of $\mcA'$ contained in $A_\rho$. If $q=1$ we let $U(C)=\Phi(C)$.

Assume that $q \ge 2$.
Applying Lemma \ref{l:compatible_clopen}, we find a clopen $U(C_1) \sim C_1$ contained in $\Phi(A_\rho)$ and such that $U(C_1) \cap L = h(C_1 \cap K)$; then a clopen $U(C_2) \sim C_2$ contained in $\Phi(A_\rho)$, disjoint from $U(C_1)$ and such that $U(C_2) \cap L = h(C_2 \cap K)$; and so on until $q-1$.

We have no choice but to set $U(C_q)= \Phi(A_\rho) \setminus \bigsqcup_{i=1}^{q-1} U(C_i)$. Since $h$ is bijective we have $U(C_q) \cap L= h(C_q \cap K)$; and by Lemma \ref{l:equidecomposability_on_clopens_is_induced_by_full_group} we also have $U(C_q) \sim C_q$.

We now have
\[\Phi(A_\rho) = \bigsqcup_{C \in \text{ atoms}(\mcA') : C \subseteq A_\rho} U(C)\]
where $U(C) \sim C$, and $U(C) \cap L = h(C \cap K)$ for all $C$. 

We define the algebra $\mcB'$ by setting as its atoms all $U(C)$, for $C$ an atom of $\mcA'$ contained in some $A_\rho$, as well as all $\sigma(\rho,A)(U(C))$ for $A \in \rho$ and $C$ contained in $A_\rho$. We obtain an isomorphism $\Phi' \colon \mcA' \to \mcB'$ by setting $\Phi'(C)=U(C)$ for every atom $C$ of $\mcA'$ contained in some $A_\rho$; and then
for any atom $C$ of $\mcA'$ contained in some $A \in \mcA$ whose $\Delta$-orbit is $\rho$, 
\[\Phi'(C)= \sigma(\rho,A) (U(\delta(\rho,A)(C))) \]

For every atom $C$ of $\mcA'$ we have $\Phi'(C) \cap L= h(C \cap K)$ by choice of $U(C)$, $K$-compatibility of $(\mcA,\Delta)$ and $L$-compatibility of $(\mcB,\Sigma)$.

We now need to construct the group $\Sigma'$. In the remainder of the proof, the letter $\tau$ always stands for an orbit of the action of $\Delta$ on the atoms of $\mcA'$, and the letter $\pi$ for an orbit of the action of $\Delta'$ on the atoms of $\mcA'$. For any $\tau$ there exists a unique $\pi$ which contains $\tau$. 

For any $\tau$ we choose a representative $B_\tau$, which intersects $K$ if some element of $\tau$ intersects $K$. Among all $B_\tau$ contained in a given $\pi$ we choose one $B_{\pi}$, and ask again that $B_{\pi}$ intersects $K$ if some element of $\pi$ intersects $K$. For every $\tau$ contained in $\pi$, we choose an involution $\lambda(\tau,\pi)\in \Lambda$ mapping $\Phi'(B_{\pi})$ to $\Phi'(B_\tau)$, and equal to the identity elsewhere. We also require that $\lambda(\tau,\pi)(\Phi'(B_{\pi}) \cap L)=\Phi'(B_\tau) \cap L$ if $B_\tau \cap K \ne \emptyset$ (which is possible thanks to Lemma \ref{l:compatible_clopen}).

Let $\Sigma'$ be the group generated by $\Sigma$ and $\{\lambda(\tau,\pi) : \tau \subset \pi\}$. Then $(\mcB',\Sigma')$ is a finite unit system (because we have added at most one link between any two $\Sigma$-orbits) and $\Phi'$ conjugates $(\mcA',\Delta')$ on $(\mcB',\Sigma')$.

We still need to show that $(\mcB',\Sigma')$ is $L$-compatible; so let $U$, $V$ be two atoms in $\mcB'$ belonging to the same $\Sigma'$-orbit and such that $U \cap L$, $V \cap L$ are both nonempty. 

There exists a $\Delta'$-orbit $\pi$, two $\Delta$-orbits $\tau_1$, $\tau_2$ contained in $\pi$ and involutions $\sigma_1, \sigma_2 \in \Sigma$ such that :
\[U= \sigma_1 \Phi'(B_{\tau_1})= \sigma_1 \lambda(\tau_1,\pi) \Phi'(B_{\pi}) \text{ and } V= \sigma_2 \Phi'(B_{\tau_2}) = \sigma_2\lambda(\tau_2,\pi) \Phi'(B_{\pi})\]
Any element $\sigma$ of $\Sigma'$ mapping $U$ to $V$ must coincide on $U$ with $\sigma_2 \lambda(\tau_2,\pi) \lambda(\tau_1,\pi) \sigma_1$. 

Using that $(\mcB,\Sigma)$ is $L$-compatible we then have:
\begin{align*}
\sigma(U \cap L) & =  \sigma_2 \lambda(\tau_2,\pi) \lambda(\tau_1,\pi)\sigma_1(U \cap L)\\
	   			  &= \sigma_2 \lambda(\tau_2,\pi) \lambda(\tau_1,\pi)\sigma_1 ((\sigma_1 \lambda(\tau_1,\pi) \Phi'(B_\pi)) \cap L) \\
				 & = \sigma_2 \lambda(\tau_2,\pi) \lambda(\tau_1,\pi)\sigma_1 (\sigma_1 \lambda(\tau_1,\pi) (\Phi'(B_\pi) \cap L)) \\
				 & = \sigma_2 \lambda(\tau_2,\pi) (\Phi'(B_\pi) \cap L ))\\
				 & = (\sigma_2 \lambda(\tau_2,\pi) \Phi'(B_\pi) ) \cap L 
				  =  V \cap L . \qedhere
\end{align*}
\end{proof}

We then obtain Theorem \ref{t:ultimate_Krieger} by using the same back-and-forth argument as in \cite{Krieger1979}*{Theorem~3.5}, which is also detailed in the proof of \cite{Melleray2023}*{Theorem~3.11}.

\section{A first absorption theorem}
\begin{defn}
Let $K$ be a closed subset of $X$, and $\Delta$ be a subgroup of $\Homeo(K)$. We denote by $R_\Gamma(K,\Delta)$ the finest equivalence relation $S$ which is coarser than $R_\Gamma$ and is such that $(x,\delta x) \in S$ for all $x \in K$ and all $\delta \in \Delta$.
\end{defn}

Informally, $R_\Gamma(K,\Delta)$ is obtained from $R_\Gamma$ by joining the $\Gamma$-orbits of $\Delta$-equivalent elements of $K$, and leaving untouched the $\Gamma$-orbits which do not intersect $K$.

\begin{lemma}\label{l:any_copy_will_do}
Let $K$, $K'$ be two $\Gamma$-malleable subsets, and $\Delta$, $\Delta'$ two ample groups over $K$.
Assume that there exists a homeomorphism $h \colon K \to K'$ such that $h\Gamma_K h^{-1}=\Gamma_{K'}$ and $(h \times h) R_\Delta=R_{\Delta'}$.

Then $R_\Gamma(K,\Delta)$ and $R_\Gamma(K',\Delta')$ are orbit equivalent.
\end{lemma}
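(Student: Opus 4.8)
The plan is to reduce everything to the strengthened Krieger theorem (Theorem~\ref{t:ultimate_Krieger}) by applying it with $\Lambda = \Gamma$. Any ample group induces relations isomorphic to itself (take the identity homeomorphism in the definition, since $\sim_\Gamma$ trivially coincides with $\sim_\Gamma$), so with the choices $\Lambda = \Gamma$ and $L = K'$ all the hypotheses of Theorem~\ref{t:ultimate_Krieger} are met: $K$ and $K'$ are both $\Gamma$-malleable, and the given homeomorphism $h \colon K \to K'$ satisfies $h \Gamma_K h^{-1} = \Gamma_{K'}$. The theorem then provides a homeomorphism $H \colon X \to X$ extending $h$ and such that $H \Gamma H^{-1} = \Gamma$. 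This single $H$ will be our orbit equivalence.

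It then remains to check that $(H \times H) R_\Gamma(K, \Delta) = R_\Gamma(K', \Delta')$. The key observation is that, by its defining property, $R_\Gamma(K, \Delta)$ is precisely the equivalence relation generated by $R_\Gamma \cup R_\Delta$ (where $R_\Delta$ is regarded as a relation on $K \subseteq X$); likewise $R_\Gamma(K', \Delta')$ is generated by $R_\Gamma \cup R_{\Delta'}$. Since conjugation by a bijection commutes with the operation of generating an equivalence relation — a finite chain witnessing membership in the generated relation is transported by $H$ (and by $H^{-1}$) to a chain witnessing membership in the image — it suffices to compute $H \times H$ on each of the two generating pieces.

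For the first piece, $H \Gamma H^{-1} = \Gamma$ gives immediately $(H \times H) R_\Gamma = R_\Gamma$, since $y \in \Gamma x$ is equivalent to $H y \in (H \Gamma H^{-1}) Hx = \Gamma(Hx)$. For the second piece, $H$ extends $h$ and $h$ is a bijection onto $K'$, so $H(K) = K'$ and the restriction of $H \times H$ to $K \times K$ agrees with $h \times h$; by hypothesis $(h \times h) R_\Delta = R_{\Delta'}$, whence $(H \times H) R_\Delta = R_{\Delta'}$ as relations on $K'$. Combining the two, $(H \times H) R_\Gamma(K, \Delta)$ is the equivalence relation generated by $R_\Gamma \cup R_{\Delta'}$, which is exactly $R_\Gamma(K', \Delta')$, as desired.

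The substance of the argument lies entirely in Theorem~\ref{t:ultimate_Krieger}; once the correct instantiation $\Lambda = \Gamma$ is identified, the verification is a purely formal manipulation of generated equivalence relations, with no further obstacle. The only point requiring a moment's care is confirming that the ``isomorphic relations'' hypothesis is automatically satisfied in this diagonal case, which is immediate.
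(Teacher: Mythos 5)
Your proof is correct and follows exactly the paper's route: the paper likewise applies Theorem~\ref{t:ultimate_Krieger} with $\Lambda=\Gamma$ and $L=K'$ to extend $h$ to a self-homeomorphism of $X$ conjugating $\Gamma$ to itself, and then notes that this extension carries $R_\Gamma(K,\Delta)$ to $R_\Gamma(K',\Delta')$. Your only addition is to spell out the (correct) verification that conjugation commutes with generating the equivalence relation, which the paper leaves implicit.
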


\begin{proof}
Since $K$ and $K'$ are $\Gamma$-malleable, Theorem \ref{t:ultimate_Krieger} allows us to extend $h$ to an homeomorphism of $X$ such that $h \Gamma h^{-1} = \Gamma$. 
Then $(h \times h)R_\Gamma(K,\Delta)= R_\Gamma(K',\Delta')$.
\end{proof}

We now prove a particular case of the absorption theorem, which is already sufficient to reduce the theory of orbit equivalence of minimal $\Z$-actions to the corresponding theory for minimal ample groups. This theorem is a direct consequence of  our main result (Theorem \ref{t:absorption}). We could directly give the proof of the stronger result but it still seems worth giving a proof in this particular case, since it is technically much easier and some of the ideas of the more involved argument are already present. 

We fix a minimal homeomorphism $\varphi$ of $X$ and denote by $R_\varphi$ the associated equivalence relation on $X$. Given a closed subset $Y$ of $X$, we denote
\[F_Y(\varphi) = \bigcap_{y \in Y} F_y(\varphi) .\]
In other words, $F_Y(\varphi)$ consists of all elements of the topological full group $F(\varphi)$ which map the positive half-orbit of every element of $Y$ onto itself.

Following \cite{Melleray2023} we say that a closed subset $Y$ is \emph{sparse} if any two distinct points of $Y$ belong to distinct $\varphi$-orbits. This immediately implies that $Y$ is $F_Y$-malleable, and it is not hard to check that if $Y$ is sparse then the equivalence relation induced by the action of $F_Y(\varphi)$ is obtained from $R_\varphi$ by splitting the $\varphi$-orbit of every $y \in Y$ in its positive and negative parts (see Section 6 of \cite{Melleray2023} for proofs). 

Choose $y_0 \in X$ and include it in some sequence $(y_n)$ of elements of $X$ which converges to some $y_\infty \in X$ and is such that $Y=\{y_n : n \in \N\} \cup\{y_\infty\}$ is sparse. Now consider $Z = \varphi^{-1}(Y) \sqcup Y$. Consider also $Y' = Y \setminus \{y_0\}$, define $Z' = \varphi^{-1}(Y') \sqcup Y'$ and denote $g \colon Z \to Z'$ the homeomorphism which maps each $y_n$ to $y_{n+1}$, each $\varphi^{-1}(y_n)$ to $\varphi^{-1}(y_{n+1})$ and fixes $y_\infty$ and $\varphi^{-1}(y_\infty)$.

The equivalence relation $R_\varphi$ is the finest equivalence relation coarser than $R_{F_Y(\varphi)}$ and such that $y$ is equivalent to $\varphi^{-1}(y)$ for every $y \in Y$; in other words, it is obtained from $R_{F_Y(\varphi)}$ by gluing together the $F_Y(\varphi)$-orbits of $y$ and $\varphi^{-1}(y)$ for every $y \in Y$. Denote by $S$ the equivalence relation induced by $F_{y_0}(\varphi)$, and observe that $S$ is the relation obtained from $R_{F_{Y'}(\varphi)}$ by gluing together the $F_{\tilde Y}(\varphi)$-orbits of $y$ and $\varphi^{-1}(y)$ for every $y \in Y'$. 

We may then apply Lemma \ref{l:any_copy_will_do} to $g \colon Z \to Z'$ with $\Gamma=F_Y(\varphi)$, $\Delta = \{1,\pi \}$ where $\pi^2=\mathrm{id}$ and $\pi(y)= \varphi^{-1}(y)$ for all $y \in Y$, $\Gamma'=F_{Y'}(\varphi)$ and $\Delta'= \{1,\pi' \}$ where $\pi'$ is the restriction of $\pi$ to $Z'$ (here, both $\Gamma_Z$ and $\Gamma_{Z'}$ are trivial). We conclude that $R_\varphi$ and $S$ are orbit equivalent. 

Let us recap what we have just obtained; this result is due to Giordano, Putnam and Skau, see Theorem 2.3 of the seminal paper \cite{Giordano1995}.

\begin{theorem}
Let $\varphi$ be a minimal homeomorphism of $X$, and $y_0 \in X$. Then the equivalence relation $R_\varphi$ induced by $\varphi$ is orbit equivalent to the relation induced by $F_{y_0}(\varphi)$, that is, the equivalence relation where the orbit of $y_0$ has been split into its positive and negative semiorbits and all the other $R_\varphi$-classes have been unchanged.
\end{theorem} 

In particular, $R_\varphi$ is induced by a continuous action of an ample group. 

\section{The absorption theorem}
The argument in the previous section could be pushed further (for instance, one can recover a proof of Theorem 6.4 of \cite{Melleray2023}) but we want to establish a stronger absorption theorem, and that seems to require some significant extra work. 

We again fix an ample group $\Gamma$ over $X$ which acts minimally. 
We need to recall some more vocabulary from \cite{Melleray2023}.

\begin{defn}
We say that a clopen partition $\mcA=(A_{i,j})_{(i,j) \in I}$ is a \emph{$\sim_\Gamma$-partition} if:
\[\forall i,j,k \quad ((i,j) \in I \text{ and } (i,k) \in I ) \Rightarrow (A_{i,j} \sim_\Gamma A_{i,k})\]
Denoting by $I_i=\{j : (i,j) \in I\}$ we say that $\{A_{i,j} : j  \in I_i\}$ is an \emph{$\mcA$-orbit}.

A \emph{fragment} of an $\mcA$-orbit $(A_{i,j})_{j  \in I_i}$ is a family $(B_{i,j})_{j \in I_i}$ of nonempty clopen subsets of $X$ such that $B_{i,j} \subseteq A_{i,j}$ for all $j \in I_i$ and $B_{i,k} \sim_\Gamma B_{i,j}$ for all $j,k \in I_i$.
\end{defn}

For the reader who is already familiar with Kakutani--Rokhlin partitions and cutting-and-stacking arguments, orbits (and their fragments) are unordered counterparts of Kakutani--Rokhlin partitions. A fragment of an orbit is what one obtains by applying a cutting procedure to build a finer partition than what one started with.
Note that whenever $(\mcA,\Delta)$ is a finite unit system with $\Delta \le \Gamma$, one can view $\mcA$ as a $\sim_\Gamma$-partition by grouping together atoms of $\mcA$ which belong to the same $\Delta$-orbit.

We say that a $\sim_\Gamma$-partition $\mcB$ \emph{refines} another $\sim_\Gamma$-partition $\mcA$ if every $\mcB$-orbit can be written as a disjoint union of fragments of $\mcA$-orbits (intuitively, $\mcB$ has been obtained from $\mcA$ by cutting some $\mcA$-orbits, then grouping some fragments together). We note that we sometimes identify a clopen partition and the Boolean algebra it generates, which should cause no risk of confusion.

The following lemma is the key step of our proof of the absorption theorem (this is what allows the ``Hilbert--Bratteli hotel'' argument to go through). The idea is the following : in order to prove that $R_\Gamma$ can absorb some small extension of itself, we want to show that $R_\Gamma$ is itself obtained by repeating $\omega$ times the same small extension; first and foremost, one needs to show that $R_\Gamma$ can by reaslized as a prescribed small extension of $R_\Lambda$ for some minimal ample group $\Lambda$. 

\begin{lemma}\label{l:everyone_is_a_small_extension}
Let $Y$ be a compact, $0$-dimensional metric space and $\Delta \le \Sigma$ two ample groups over $Y$. 

There exists a minimal ample group $\Lambda \le \Gamma$, a closed subset $K$ which is both $\Lambda$- and $\Gamma$-malleable, and a homeomorphism $h \colon Y \to K$ such that $h \Delta h^{-1} = \Lambda_K$, $h \Sigma h^{-1} = \Gamma_K $ and $R_\Gamma=R_\Lambda(K,\Gamma_K)$.
\end{lemma}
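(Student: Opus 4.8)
The plan is to construct the Cantor space $X$ together with $\Gamma$, $\Lambda$ and $K$ \emph{simultaneously}, as an inverse limit of finite unit systems, arranged so that $K$ is a copy of $Y$ carrying $\Sigma$ as $\Gamma_K$ and $\Delta$ as $\Lambda_K$, while $\Gamma$ is obtained from $\Lambda$ by adjoining $K$-compatible involutions that implement the extra moves of $\Sigma$ over $\Delta$ on $K$ and agree with elements of $\Lambda$ away from $K$. First I would fix the bookkeeping on $Y$: since $\Delta \le \Sigma$ are ample, I would choose a single exhaustive sequence of finite unit systems $(\mcB_n,\Sigma_n)$ for $(Y,\Sigma)$ together with finite subgroups $\Delta_n \le \Sigma_n \cap \Delta$ exhausting $\Delta$ such that each $(\mcB_n,\Delta_n)$ is again a unit system (the compatibility of the two filtrations on the same algebras is obtained by passing to common refinements, as in the standard construction of exhaustive sequences for ample groups).

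The core is the inductive construction over $X$. At stage $n$ I would maintain a finite unit system $(\mcA_n,\Gamma_n)$, a subgroup $\Lambda_n \le \Gamma_n$ with $(\mcA_n,\Lambda_n)$ a unit system, and a clopen set $K_n$ with $K_{n+1}\subseteq K_n$, subject to the following: both $(\mcA_n,\Gamma_n)$ and $(\mcA_n,\Lambda_n)$ are $K$-compatible (obtained via Lemma~\ref{l:refining_to_compatible}); the restrictions $(\mcA_{n,K},\Gamma_{n,K})$ and $(\mcA_{n,K},\Lambda_{n,K})$ are identified with $(\mcB_n,\Sigma_n)$ and $(\mcB_n,\Delta_n)$ through a fixed, coherent isomorphism; and $\Gamma_n=\langle \Lambda_n,\{\hat\sigma\}\rangle$, where the $\hat\sigma$ are $K$-compatible involutions implementing the generators of $\Sigma_n$ over $\Delta_n$ on $K$, each $\hat\sigma$ coinciding off $K$ with a piecewise-$\Lambda_n$ element. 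The refinement from $n$ to $n+1$ would follow $\mcB_n\to\mcB_{n+1}$ on the $K$-part (cutting and regrouping orbits exactly as in the proof of Lemma~\ref{l:Krieger_construction}, using Lemma~\ref{l:compatible_clopen} to place the new clopen sets over $K$) and pad the complement of $K$ with extra material. Throughout I would impose the usual combinatorial conditions guaranteeing minimality (full connectivity between consecutive orbit systems, atoms shrinking to points) and, crucially, I would keep $K_n$ a vanishing fraction of the $\Lambda_n$-orbits so that $\sup_{\mu\in M(\Lambda)}\mu(K_n)\to 0$.

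Passing to the limit, $\Gamma=\bigcup_n\Gamma_n$ and $\Lambda=\bigcup_n\Lambda_n$ are minimal ample groups on the Cantor space $X=\varprojlim \mcA_n$, and $K=\bigcap_n K_n$ is closed. By the measure control $K$ is $\Lambda$-thin, hence $\Gamma$-thin since $M(\Gamma)\subseteq M(\Lambda)$; by $K$-compatibility of the $(\mcA_n,\Gamma_n)$ it is $\Gamma$-étale (Lemma~\ref{l:induced_group_is_what_you_expect}) and a fortiori $\Lambda$-étale, so $K$ is malleable for both. The coherent identifications of the $K$-restrictions assemble into a homeomorphism $h\colon Y\to K$, and Lemma~\ref{l:induced_group_is_what_you_expect} gives $h\Delta h^{-1}=\Lambda_K$ and $h\Sigma h^{-1}=\Gamma_K$. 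For the relation identity, the inclusion $R_\Lambda(K,\Gamma_K)\subseteq R_\Gamma$ is immediate, since $\Lambda\le\Gamma$ and the graph of $\Gamma_K$ consists of restrictions of $\Gamma$-elements; conversely, because each generator $\hat\sigma$ of $\Gamma$ over $\Lambda$ equals a $\Lambda$-element off $K$, any $\Gamma$-identification decomposes as a concatenation of $R_\Lambda$-moves and $\Gamma_K$-jumps at $K$, whence $R_\Gamma\subseteq R_\Lambda(K,\Gamma_K)$.

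The main obstacle is the inductive insertion of the involutions $\hat\sigma$. One must simultaneously guarantee that (i) $(\mcA_n,\Gamma_n)$ remains a genuine unit system — in particular that the evaluation map stays injective, which fails as soon as some atom is mapped to itself nontrivially; (ii) $K$-compatibility is preserved; and (iii) each $\hat\sigma$ coincides with a $\Lambda_n$-element off $K$, so that no identification beyond those forced by $\Gamma_K$ on $K$ is created. Reconciling (iii) with (i) forces a refinement of the algebra: I would build each $\hat\sigma$ on a nested sequence of clopen neighbourhoods of the relevant atoms of $K$, using Glasner--Weiss (Lemma~\ref{l:Glasner_Weiss}) and Lemma~\ref{l:compatible_clopen} to carry a $\Lambda$-equidecomposition on each annulus (balancing the final layer through Lemma~\ref{l:equidecomposability_on_clopens_is_induced_by_full_group}), so that $\hat\sigma$ moves every atom off itself and makes exactly one new connection at $K$. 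Threading minimality and thinness through every stage is then routine bookkeeping, but the delicate interplay between the unit-system axioms and the requirement that the new generators be invisible to $R_\Lambda$ off $K$ is where the real work lies.
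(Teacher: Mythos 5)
There is a genuine gap, and it lies in the quantification over $\Gamma$. In this lemma $\Gamma$ is not existentially quantified: it is the minimal ample group fixed at the start of the section, and the proof of Theorem \ref{t:absorption} applies the lemma to that specific $\Gamma$ (and, through it, to the specific malleable set appearing in the absorption theorem). Your construction builds the Cantor space $X$, the group $\Gamma$, the subgroup $\Lambda$ and the set $K$ simultaneously from scratch, so at best it shows that \emph{some} minimal ample group admits such a $\Lambda$ and $K$ --- a strictly weaker statement that cannot be fed into the proof of the absorption theorem. To repair this you would have to anchor the construction to an exhaustive sequence $(\mcB_n,\Gamma_n)$ of finite unit systems for the \emph{given} $(X,\Gamma)$ and arrange that the group you produce induces the same relation on $\Clopen(X)$, so that Krieger's theorem conjugates it onto $\Gamma$; your free-form ``padding of the complement of $K$ with extra material'' provides no such control. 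This is exactly how the paper proceeds: it embeds a copy of $(Y,\Delta,\Sigma)$ into the atoms of the $\mcB_n$, defines $\Lambda_n$ as the permutations preserving a refinement $\sim_n$ of the $\mcB_n$-orbit partition, builds a group $\tilde\Gamma \supseteq \Lambda$ whose unit systems have the same orbits as the $(\mcB_n,\Gamma_n)$, and transfers the conclusion to $\Gamma$ by conjugacy.

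A second, related problem is the direction of your group construction. You obtain $\Gamma$ from $\Lambda$ by adjoining $K$-compatible involutions $\hat\sigma$, and you correctly identify as ``the main obstacle'' the task of keeping the $(\mcA_n,\Gamma_n)$ genuine unit systems while making the new generators invisible to $R_\Lambda$ off $K$ --- but you do not resolve it: the sketch via nested neighbourhoods and annuli does not explain how redundancies between the $\hat\sigma$ and the $\Lambda_n$-elements are avoided, nor how local finiteness and injectivity of the evaluation maps survive in the limit. This is precisely the failure mode of the original argument of \cite{Melleray2023} described in the Appendix. The paper sidesteps it by going the other way: $\Lambda_n$ is carved out of $\tilde\Gamma_n$ as the stabilizer of a finer orbit partition (so the unit-system axioms are automatic), and the passage from $\Lambda_n$ to $\tilde\Gamma_n$ is governed by explicit quantitative conditions on the partitions (each $\mcC_{n+1}$-orbit containing prescribed numbers of fragments of every $\mcB_n$-orbit), which is also what delivers minimality of $\Lambda$, thinness of $K$, and the identity $R_{\tilde\Gamma}=R_\Lambda(K,\tilde\Gamma_K)$. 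Without conditions of this kind, the ``routine bookkeeping'' you defer is not actually routine.
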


\begin{proof}
We fix an exhaustive sequence $(\mcA_n,\Sigma_n)$ of finite unit systems for $(Y,\Sigma)$ and let $\Delta_n=\Sigma_n \cap \Delta$.

Denote by $h_n$ the number of atoms of $\mcA_n$. We pick an exhaustive sequence $(\mcB_n,\Gamma_n)$ of finite unit systems for $(X,\Gamma)$; for all $n$ we view $\mcB_n$ as a $\sim_\Gamma$-partition so that the $\mcB_n$- and $\Gamma_n$-orbits of every atom of $\mcB_n$ coincide. 

We also assume that for any $n$ there are more than $h_n$ distinct $\mcB_n$-orbits (which may be achieved by cutting them if necessary). Denoting by $k_n$ the number of atoms of $\mcB_n$, we may also ensure (because $\Gamma$ acts minimally) that every $\mcB_{n+1}$-orbit contains more than $(k_n+1)h_{n+1}$ disjoint fragments of every $\mcB_n$-orbit and that $k_{n} \ge n h_{n+1}$.

To initialize the construction, we add the assumption that $\mcA_0$, $\mcB_0$, $\Sigma_0$ and $\Gamma_0$ are all trivial.

\medskip
\textbf{Step $1$. Building a copy of $Y$.}
The first step consists of choosing, within each $\mcB_n$, some atoms which form a finer and finer approximation $K$ of $Y$; and doing so in such a way that these atoms generate a sequence of clopen partitions of $K$ which is induced by a copy of $\Sigma$. We also want to introduce ``cuts'' into the $\mcB_n$-orbits so as to mimic the way $\Sigma$-orbits of $\mcA_n$ are cut when one moves down from $\Sigma$ to $\Delta$.

By first choosing some elements of $\mcB_n$, then grouping them together with other elements of $\mcB_n$, we build a sequence of maps $\Phi_n \colon \mcA_n \to \mcB_n$ and a sequence of equivalence relations $\sim_n$ on $\mcB_n$; intuitively, $\Phi_n(\mcA_n)$ is our approximation (at step $n$) of the domain of the copy of $(Y,\Delta,\Sigma)$ that we are trying to build, and the restriction of $\sim_n$ to $\Phi_n(\mcA_n)$ mimics $\sim_{\Delta_n}$. 

Explicitly, our construction proceeds by enforcing the following conditions:

\begin{enumerate}
\item $\Phi_n \colon \mcA_n \to \mcB_n$ is injective, and for any $A,B \in \mcA_n$ we have 
\[\left(\Phi_n(A) \sim_{\Gamma_n} \Phi_n(B)\right) \Leftrightarrow (A \sim_{\Sigma_n} B) \, ; \ \left(\Phi_n(A) \sim_n \Phi_n(B)\right) \Leftrightarrow (A \sim_{\Delta_n} B) \]

\item For any $U,V \in \mcB_n$, $U \sim_n V \Rightarrow U \sim_{\Gamma_n} V$.

\item  If $U \in \mcB_{n}$ is such that the $\mcB_n$-orbit $\tau_U$ of $U$ does not intersect the image of $\Phi_n$, then all elements of $\tau_U$ are $\sim_{n}$-equivalent. Else, every element of $\tau_U$ is $\sim_{n}$-equivalent to some $\Phi_{n}(A)$ for $A \in \mcA_{n}$. 
\item If $A \in \mcA_n$, $B \in \mcA_{n+1}$ are such that $B \subseteq A$ then $\Phi_{n+1}(B) \subseteq \Phi_n(A)$.

Denote by $\mcC_n$ the $\sim_\Gamma$-partition whose atoms are the same as those of $\mcB_n$ and whose orbits are the $\sim_n$-classes. Each $\mcB_n$-orbit is a union of $\mcC_n$-orbits.

\item Given $A \in \mcA_{n+1}$, the $\sim_{n+1}$-class of $\Phi_{n+1}(A)$ is obtained by joining one distinguished fragment of the $\mcC_{n}$-orbit of every $\Phi_{n+1}(A')$ for $A' \in \Delta_{n+1}A$ and fragments of the other $\mcB_n$-orbits, so that a fragment of every $\mcB_n$-orbit appears at least once. In particular, $\mcC_{n+1}$ refines $\mcC_n$ and each $\mcC_{n+1}$-orbit contains a fragment of every $\mcC_n$-orbit.
\end{enumerate}

To see that this is indeed possible, assume that this construction has been carried out up to some $n$ (for $n=0$ there is nothing to do). Below, for $A$ an atom of $\mcA_{n+1}$ we denote by $A'$ the unique atom of $\mcA_n$ which contains $A$.

We first define an injective $\Phi_{n+1} \colon \mcA_{n+1} \to \mcB_{n+1}$. 
For any $A \in \mcA_{n+1}$ we ask that $\Phi_{n+1}(A) \subseteq \Phi_n(A')$, and require also that, for any $A,B \in \mcA_{n+1}$, $\Phi_{n+1}(A)$ and $\Phi_{n+1}(B)$ belong to the same $\mcB_{n+1}$-orbit iff $A$ and $B$ belong to the same $\Sigma_{n+1}$-orbit. This is feasible because the number of $\mcB_{n+1}$-orbits is greater than $h_{n+1}$ (hence larger than the number of $\Sigma_{n+1}$-orbits of atoms of $\mcA_{n+1}$) and, for any $A' \in \mcA_n$, in any $\mcB_{n+1}$-orbit there are more than $h_{n+1}$ atoms contained in $A'$, so that we can guarantee the injectivity of $\Phi_{n+1}$.

Next, fix a $\mcB_{n+1}$-orbit $\tau$ which intersects $\Phi_{n+1}(\mcA_{n+1})$. We
start building $\sim_{n+1}$ on $\tau$, by choosing for each $A \in \mcA_{n+1}$ such that $\Phi_{n+1}(A) \in \tau$ a fragment of the $\mcC_n$-orbit of $\Phi_n(A')$ which contains $\Phi_{n+1}(A)$ and is contained in $\tau$. We do this in such a way that 
$\Phi_{n+1}(A)$ and $\Phi_{n+1}(B)$ belong to the same fragment iff $A' \in \Delta_{n} B'$. 
Again, this is feasible because $\tau$ contains many fragments of every $\mcB_n$-orbit, hence of every $\mcC_n$-orbit. Then we include in the $\sim_{n+1}$-class of $\Phi_{n+1}(A)$ the union of the fragments we have chosen for all $B \in \Delta_{n+1}A$ (and nothing else at this point).

The union of fragments of $\mcC_n$-orbits on which we have so far defined $\sim_{n+1}$ in $\tau$ is actually a union of fragments of $\mcB_n$-orbits: indeed, for each $\mcB_n$-orbit $\rho$ which intersects $\Phi_n(\mcA_n)$ we have selected whole fragments, because $\rho$ is the union of $\sim_n$-classes of all $\Phi_n(A')$ contained in $\rho$ (and the fragments of other $\mcB_n$-orbits have not yet been taken under consideration). Thus, at this point, each partial $\sim_{n+1}$-equivalence class in $\tau$ consists of a union of a most $h_{n+1}$ fragments of $\mcB_{n}$-orbits. Inside $\tau$, we have at least $(k_n+1)h_{n+1}$ such fragments of each $\mcB_n$-orbit at our disposal, hence we can distribute the remaining fragments among the (at most $h_{n+1}$) disjoint partial $\sim_{n+1}$-classes contained in $\tau$ in such a way that each $\sim_{n+1}$-class contains at least one fragment of every $\mcB_n$-orbit, and every atom of $\tau$ is in the $\sim_{n+1}$-class of some $\Phi_{n+1}(A)$.

Performing the above procedure for every $\tau$ which intersects $\Phi_{n+1}(\mcA_{n+1})$, we obtain the desired relation $\sim_{n+1}$.

For all $n \in \N$ we let $K_n= \bigsqcup_{A \in \mcA_n} \Phi_n(A)$. This is a clopen set, $K_{n+1} \subseteq K_n$ for all $n$ and we define $K=\bigcap_{n \in \N} K_n$. 

The sequence $(\Phi_n)_n$ induces an isomorphism $\Phi \colon \Clopen(Y) \to \Clopen(K)$, and we let $h \colon Y \to K$ be the corresponding homeomorphism. 

\medskip
\textbf{Step $2$. Defining $\Lambda$.} We now want to use our previous work in order to define a copy $\Lambda$ of $\Delta$.

Denote by $\Lambda_n$ the group of permutations of the atoms of $\mcB_n$ which map each $\sim_n$-class to itself. We define embeddings $i_n \colon \Lambda_n \to \Lambda_{n+1}$ so that (among other properties) for any $\lambda \in \Lambda_n$ and any $B \in \mcB_n$, if $B_1,\ldots,B_p$ are the atoms of $\mcB_{n+1}$ such that $B= \bigsqcup_{i=1}^p B_i$ then $\lambda (B)= \bigsqcup_{i=1}^p i_n(\lambda)(B_i)$.

To explain how $i_n$ is defined, we fix a $\sim_n$-orbit $\tau$ and assume first that there is $A \in \mcA_n$ such that $\Phi_n(A) \in \tau$. Given another atom $U$ of $\tau$, denote $\lambda_{U}$ the unique involution in $\Lambda_n$ which maps $\Phi_n(A)$ to $U$ and leaves all other elements of $\mcB_n$ fixed. 

First, we consider the case where $U=\Phi_n(\delta A)$ with $\delta \in \Delta_n$. We can write $A=\bigsqcup_{i=1}^p A_i$ with $A_i \in \mcA_{n+1}$ and set 
\[ i_n(\lambda_U)(\Phi_{n+1}(A_i))=\Phi_{n+1}(\delta A_i) \]
We still have to define $i_n(\lambda_U)$ on some atoms of $\mcB_{n+1}$ contained in $\Phi_n(A),\Phi_n(\delta A)$. In each $\mcC_{n+1}$-orbit there remain as many atoms contained in $\Phi_n(\delta A)$ where $i_n(\lambda_U)$ has not been defined as there are such atoms contained in $\Phi_n(A)$ (because $\mcC_{n+1}$ refines $\mcC_n$, every $\mcC_{n+1}$-orbit has as many atoms contained in $\Phi_n(A)$ and $\Phi_n(\delta A)$). We may thus match those atoms arbitrarily to define $i_n(\lambda_U)$ so that it maps each of these atoms to an atom in the same $\sim_{n+1}$-class.

The second case is when $U$ is not in the image of $\Phi_n(\mcA_n)$. Again, each $\mcC_{n+1}$-orbit has as many atoms contained in $U$ and in $\Phi_n(A)$, and we match those arbitrarily to define $i_n(\lambda_U)$.

If our $\sim_n$-orbit $\tau$ does not intersect the image of $\Phi_n$, then we choose $V \in \tau$ at random, define $\lambda_U$ as above (for $U$ another element of $\tau$) and define $i_n(\lambda_U)$ as in the previous paragraph (any involution which extends $\lambda_U$ and fixes setwise each $\sim_{n+1}$-class will do the job).

Once all this is done, we have completely defined $i_n \colon \Lambda_n \to \Lambda_{n+1}$. 

The inductive limit $\Lambda$ of the sequence $(\Lambda_n,i_n)_n$ naturally acts on $\Clopen(X)$, and we view it as a subgroup of $\Homeo(X)$. By construction, this group is ample. 

Denoting also by $i_n$ the embedding of $\Lambda_n$ in $\Lambda$, the $i_n(\Lambda_n)$-orbit of every element of $\mcC_{n}$ coincides with its $\mcC_{n}$-orbit. Given a nonempty $U \in \Clopen(X)$, let $n$ be such that $U $ is a union of atoms of $\mcC_n$. Since every  $\mcC_{n+1}$-orbit contains a fragment of every $\mcC_n$-orbit, hence an atom contained in $U$, we have $i_{n+1}(\Lambda_{n+1})U=X$. We conclude that $\Lambda$ acts minimally on $X$. 

In each $\mcC_{n+1}$-orbit there are at least $k_n$ elements, and at most $h_{n+1}$ of those belong to $K_{n+1}$; since $h_{n+1}/k_n \xrightarrow[n \to + \infty]{} 0$, we conclude that $\mu(K)=0$ for every $\mu \in M(\Lambda)$, so $K$ is $\Lambda$-thin. 

By construction, if $A,B \in \mcA_n$ and $\lambda \in \Lambda_n$ are such that $\lambda \Phi_n(A)=\Phi_n(B)$
we can write $A= \bigsqcup_{i=1}^p A_i$, $B=\bigsqcup_{i=1}^p B_i$ with $A_i, B_i \in \mcA_{n+1}$ and
$i_n(\lambda)(\Phi_{n+1}(A_i))=\Phi_{n+1}(B_i)$. By induction, it follows that $i_n(\lambda)(\Phi_n(A) \cap K)= \Phi_n(B) \cap K$.
This implies that $K$ is $\Lambda$-étale. Finally, $K$ is $\Lambda$-malleable.

Our definition of $i_n$ also ensures that $h \Delta h^{-1}=\Lambda_{K}$ (where $h \colon Y \to K$ is the homeomorphism defined at the end of the first step). 

Unfortunately we are not quite done yet: at this stage there is no reason why $\Lambda$ should be contained in $\Gamma$. To remedy this, we replace $\Gamma$ by a conjugate which we construct now.

\medskip
\textbf{Step $3$. Defining a conjugate of $\Gamma$.}
We now want to build an ample group $\tilde \Gamma$ which contains $\Lambda$ and is such that one can write $\tilde \Gamma=\bigcup_n \tilde \Gamma_n$, so that the $\tilde \Gamma_n$-orbit of every element of $\mcB_n$ coincides with its $\mcB_n$-orbit. We also need to guarantee that $h \Sigma h^{-1} = \tilde{\Gamma}_K$. Again this group is obtained as an inductive limit of finite permutation groups.

For all $n$, we let $\tilde \Gamma_n$ denote the group of all permutations of $\mcB_n$ which map each $\mcB_n$-orbit to itself; note that $\Lambda_n$ is a subgroup of $\tilde \Gamma_n$. Similarly to what we did above, we build embeddings $j_n \colon \tilde \Gamma_n \to \tilde \Gamma_{n+1}$ to form an adequate inductive limit. 

First, we ask that $j_n$ coincide with $i_n$ on $\Lambda_n$. Next, let $\tau$ be a $\mcB_n$-orbit; if $\tau$ is also a $\mcC_n$-orbit then we already know how to extend elements of $\tilde \Gamma_n$ whose support is contained in $\tau$ (those belong to $\Lambda_n$ so we simply apply $i_n$). So assume that $\tau$ intersects the image of $\Phi_n$ and let $A_1,\ldots,A_p \in \mcA_n$ be such that $p \ge 2$ and $\tau$ is the disjoint union of the $\mcC_n$-orbits of $\Phi_n(A_1),\ldots,\Phi_n(A_p)$.

The $\Sigma_n$-orbit of $A_1$ is the disjoint union of the $\Delta_n$-orbits of $A_1, \ldots,A_p$. For $i \in \{2,\ldots,p\}$ we let $\sigma_i$ be the involution in $\Sigma_n$ which maps $A_1$ to $A_i$ and fixes all other atoms, and denote $\gamma_i$ the element of $\tilde \Gamma_n$ which maps $\Phi_n(A_1)$ to $\Phi_n(A_i)$ and fixes all other atoms. 

Fix $i \in \{2,\ldots,p\}$. Write $A_1=\bigsqcup_{k=1}^q A_{1,k}$, $A_i=\bigsqcup_{k=1}^q \sigma_i(A_{1,k})$ with $A_{1,k} \in \mcA_{n+1}$, then set 
\[ j_n(\gamma_i)(\Phi_{n+1}(A_{1,k}))=\Phi_{n+1}(\sigma_i A_{1,k}) \]

On each atom of $\mcB_{n+1}$ which does not intersect $\Phi_n(A_1) \sqcup \Phi_n(A_i)$ we must have $j_n(\gamma_i)$ coincide with the identity. Now, we observe that in each $\mcC_{n+1}$-orbit there are as many atoms contained in $\Phi_{n}(A_1)$ on which $j_n(\gamma_i)$ has not yet been defined as there are such atoms in $\Phi_n(A_i)$. 
This follows from the way $\sim_{n+1}$ has been defined above: so far we have defined what $j_n(\gamma_i)$ does to atoms contained in the $\mcC_n$-fragments intersecting $\Phi_{n+1}(\mcA_{n+1})$, and the remainder consists of unions of fragments of $\mcB_n$-orbits.
We then match those atoms arbitrarily to define $j_n(\gamma_i)$ so that, on each atom $U$ which is not in $\Phi_{n+1}(\mcA_{n+1})$, $j_n(\gamma_i)U$ is $\sim_{n+1}$-equivalent to $U$ (this property is used at the end of the proof).

We apply this procedure to every $\mcB_n$-orbit on which $\sim_n$ is nontrivial. 
This finally defines $j_n$, and we obtain an ample group $\tilde \Gamma$ by considering the inductive limit of $(\tilde \Gamma_n,j_n)$. By construction we have $h \Sigma h^{-1}= \tilde{\Gamma}_K$. Denoting $j_n$ the embedding of $\tilde \Gamma_n$ into $\tilde \Gamma$, the $j_n(\tilde \Gamma_n)$-orbit of every element of $\mcB_n$ coincides with its $\Gamma_n$-orbit, and $(\mcB_n,j_n(\tilde \Gamma_n))_n$ is an exhaustive sequence of unit systems for $\tilde \Gamma$. It follows from Krieger's theorem that $\Gamma$ and $\tilde \Gamma$ are conjugate.

\medskip
\textbf{Step $4$. Checking that the construction satisfies the desired conditions.}
By definition, $\tilde \Gamma$ contains $\Lambda$. Since we already noted that $\mu(K)=0$ for all $\mu \in M(\Lambda)$, we also have $\mu(K)=0$ for all $\mu \in M(\tilde \Gamma)$.

Similarly to Step $2$, the definition of each $j_n$ also ensures that for $A,B \in \mcA_n$ and $\gamma \in \tilde \Gamma_n$ such that $\gamma \Phi_n(A)=\Phi_n(B)$ we have $j_n(\gamma)(\Phi_n(A) \cap K)=\Phi_n(B) \cap K$. Hence $K$ is $\tilde \Gamma$-malleable. We  have already noted that $h \Delta h^{-1} = \Lambda_K$ and $h \Sigma h^{-1} = \tilde \Gamma_K$.

Since $\Lambda \le \tilde \Gamma$ we have that $R_{\Lambda}(K,\tilde{\Gamma}_K)$ is contained in $R_{\tilde \Gamma}$. 
To see the converse inclusion, let $\gamma \in \tilde \Gamma$ and pick $n$ such that $\gamma \in j_n(\tilde \Gamma_n)$. Let $U$ be an atom of $\mcB_n$. If $\gamma U$ and $U$ belong to the same $\mcC_n$-orbit then $\gamma$ coincides on $U$ with an element of $\Lambda$, so $\gamma x \in \Lambda x$ for every $x \in U$. If $\gamma U$ and $U$ belong to different $\mcC_n$-orbits, then there exist $\lambda_1, \lambda_2 \in i_n(\Lambda_n)$, $A_1,A_2 \in \Phi_n(\mcA_n)$ and $\sigma \in \Sigma_n \setminus \Delta_n$ such that $\sigma(A_1)=A_2$ and $\lambda_1 U = \Phi_n(A_1)$, $\lambda_2 \Phi_n(A_2)=\gamma U$. Let $\gamma'$ be the involution of $\tilde \Gamma_n$ which maps $\Phi_n(A_1)$ to $\Phi_n(A_2)$ and fixes all other atoms. By construction, we have that $j_n(\gamma')(K)=K$, and for every $x \in X \setminus K$ we have $j_n(\gamma')(x) \in \Lambda x$. Since $\gamma$ coincides on $U$ with $i_n(\lambda_2)j_n(\gamma')i_n(\lambda_1)$, we see that $(x,\gamma x) \in R_\Lambda(K,\tilde{\Gamma}_K)$ for every $x \in U$. We finally conclude that $R_{\tilde \Gamma}=R_\Lambda(K,\tilde{\Gamma}_K)$.

We have obtained the desired result for $\tilde \Gamma$ instead of $\Gamma$; since $\Gamma$ and $\tilde \Gamma$ are conjugate in $\Homeo(X)$ this is enough.
\end{proof}

We are finally ready to prove the main result. We repeat that the idea is to use Lemma \ref{t:absorption} to see $R_\Gamma$ as having been obtained by absorbing countably many ``copies'' of  $(K,\Delta)$; intuitively, absorbing one more copy which is independent from the previous ones should not change the orbit equivalence class of $R_\Gamma$.

\begin{theorem}[The absorption theorem]\label{t:absorption} Let $\Gamma$ be a minimal ample group, $K$ a $\Gamma$-malleable subset and $\Sigma$ an ample group over $K$ which contains $\Gamma_K$. Then $R_\Gamma(K,\Sigma)$ is orbit equivalent to $R_\Gamma$.
\end{theorem}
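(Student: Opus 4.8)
The plan is to deduce the absorption theorem from Lemma~\ref{l:everyone_is_a_small_extension} together with the homogeneity statement of Lemma~\ref{l:any_copy_will_do}, using the ``Hilbert--Bratteli hotel'' idea described in the introduction. The informal picture is that adding one copy of $(K,\Sigma)$ to $R_\Gamma$ should not change its orbit equivalence class, because $R_\Gamma$ can already be presented as the result of absorbing countably many independent copies of the same small extension; absorbing one more is the dynamical analogue of $1+\omega=\omega$.

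First I would apply Lemma~\ref{l:everyone_is_a_small_extension} to the pair of ample groups $\Gamma_K \le \Sigma$ over $K$ (this is the right input: we want to realize the \emph{target} relation $\sim_\Sigma$ as the top layer). The lemma produces a minimal ample group $\Lambda \le \Gamma'$ over $X$, a closed set $K_0$ which is both $\Lambda$- and $\Gamma'$-malleable, and a homeomorphism $h_0 \colon K \to K_0$ conjugating $\Gamma_K$ to $\Lambda_{K_0}$, $\Sigma$ to $\Gamma'_{K_0}$, and satisfying $R_{\Gamma'}=R_\Lambda(K_0,\Gamma'_{K_0})$. The crucial structural feature I would extract from the proof of that lemma is that $\Gamma'$ is built as an inductive limit in which $K_0$ sits inside a decreasing sequence of clopen neighborhoods, so that the construction can be iterated: one produces not just one malleable copy but a countable sequence of pairwise ``independent'' malleable copies $(K_0,K_1,K_2,\dots)$ inside $X$, each $\Lambda$-malleable, with the top-layer group $\Gamma'_{K_m}$ on each one isomorphic (via a homeomorphism conjugating the relevant ample groups) to $(K,\Sigma)$, and such that $R_{\Gamma'}=R_\Lambda(K_0,\Gamma'_{K_0})$ already absorbs all of them. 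The point is that after absorbing all of $K_0,K_1,\dots$ one obtains exactly $R_{\Gamma'}$, whereas absorbing only $K_1,K_2,\dots$ (i.e.\ all but the first) should give an orbit-equivalent relation.

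The key move is then to compare $R_{\Gamma'}$ with $R_{\Gamma'}(K_0,\Gamma'_{K_0}\text{-style further extension})$ by a shift of the indexing: since the copies are mutually independent and isomorphic, the relation obtained by adding one more copy on top of $R_{\Gamma'}$ is conjugate, via a homeomorphism of $X$ that ``shifts the hotel'' $K_m \mapsto K_{m+1}$, to $R_{\Gamma'}$ itself. Here Lemma~\ref{l:any_copy_will_do} is what makes the individual reindexing legitimate: because each $K_m$ is $\Gamma'$-malleable and the restricted groups are conjugate by a homeomorphism respecting both $\Gamma'_{K_m}$ and the $\Sigma$-structure, Theorem~\ref{t:ultimate_Krieger} extends each such conjugacy to a homeomorphism of $X$ normalizing $\Gamma'$, and hence $R_{\Gamma'}(K_m,\Sigma')$ and $R_{\Gamma'}(K_{m+1},\Sigma')$ are orbit equivalent. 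Finally, I would transport everything back from $\Gamma'$ to $\Gamma$: since Lemma~\ref{l:everyone_is_a_small_extension} only yields $\Gamma'$ conjugate to $\Gamma$ (and the statement of the absorption theorem is invariant under conjugacy), working with $\Gamma'$ and then conjugating back suffices, exactly as in the final line of the proof of that lemma.

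The main obstacle will be making the ``shift'' rigorous, i.e.\ organizing the countably many independent copies and the single reindexing homeomorphism into one genuine orbit equivalence rather than a mere telescoping of countably many applications of Lemma~\ref{l:any_copy_will_do} (which individually only move one copy and would, if iterated naively, require controlling a convergent infinite product of homeomorphisms). The clean way to handle this, and what I expect to be the technical heart, is to set things up so that $R_\Gamma(K,\Sigma)$ itself is identified, after the initial application of Lemma~\ref{l:everyone_is_a_small_extension}, with the relation on $X$ that absorbs \emph{all} copies including a fresh ``$0$-th'' one, and $R_\Gamma$ with the relation absorbing the copies indexed $1,2,\dots$; then a single homeomorphism realizing the index shift $m\mapsto m+1$ (extended via Theorem~\ref{t:ultimate_Krieger}) gives the orbit equivalence in one stroke. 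Verifying that this shift map is a well-defined homeomorphism of $X$ carrying one relation precisely onto the other—rather than just on a dense or measure-one set—is the delicate point, and it is exactly where the malleability (thinness plus étaleness) of each copy and the homogeneity provided by the strengthened Krieger theorem are indispensable.
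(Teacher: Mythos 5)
Your high-level picture is the right one, and your final paragraph even identifies the correct endgame: present $R_\Gamma$ as the relation absorbing copies $1,2,\dots$ over an auxiliary base relation, present $R_\Gamma(K,\Sigma)$ as the relation absorbing copies $0,1,2,\dots$, and realize the orbit equivalence by a single index shift extended via Theorem~\ref{t:ultimate_Krieger} (through Lemma~\ref{l:any_copy_will_do}). But the mechanism you propose for producing the countably many copies has a genuine gap. Applying Lemma~\ref{l:everyone_is_a_small_extension} to the pair $\Gamma_K\le\Sigma$ over $K$ yields exactly \emph{one} malleable copy, and your claim that ``the construction can be iterated'' to produce a sequence of independent $\Lambda$-malleable copies inside $X$ is not a routine extraction from the proof of that lemma: iterating it produces a decreasing chain of ambient groups $\Gamma\ge\Lambda_1\ge\Lambda_2\ge\cdots$ and copies living over different base groups, and you would then need the intersection group to remain minimal and ample, the union of the copies (plus a limit point) to be closed and malleable for it, and the identity $R_\Gamma=R_{\Lambda_\infty}(\cdot)$ to survive the limit. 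None of this is established, and it is precisely the telescoping problem you flag but do not resolve.

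The paper's device, which your proposal is missing, is to apply Lemma~\ref{l:everyone_is_a_small_extension} \emph{once} to the compact space $Y=(\bigsqcup_{n}K_n)\sqcup\{y\}$ consisting of countably many copies of $K$ converging to a point, equipped with the asymmetric pair of ample groups $\Delta\le\Pi$: on every copy $\Delta$ acts as $\Gamma_K$, while $\Pi$ acts as $\Gamma_K$ on the copy $K_0$ but as $\Sigma$ on the copies $K_n$, $n\ge1$. This single application produces one base group $\Lambda$ and one closed malleable set $Z=g(Y)$ with $R_\Gamma=R_\Lambda(Z,\Gamma_Z)$; the asymmetry at index $0$ is what makes absorbing $Z_0$ a no-op (so $R_\Gamma=R_\Lambda(Z',\Gamma_{Z'})$ with $Z'=Z\setminus Z_0$), while upgrading the structure on $Z_0$ to $\Sigma$ gives a relation orbit equivalent to $R_\Gamma(K,\Sigma)$ after one further use of Lemma~\ref{l:any_copy_will_do} relating $(Z_0,\Theta_0)$ to the originally given $(K,\Sigma)$ --- a step your plan also omits, since your copies all live in the image of the auxiliary construction and nothing ties them back to the given malleable set $K$. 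The shift $f\colon Z\to Z'$ is then applied with ambient group $\Lambda$ (not $\Gamma$), which is what collapses the would-be infinite telescoping into a single application of Lemma~\ref{l:any_copy_will_do}. Without the choice of input $(Y,\Delta\le\Pi)$, the rest of your outline cannot be carried out as stated.
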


\begin{proof}
Form a compact metric space $Y= (\bigsqcup_{n \in \N} K_n) \sqcup \{y\}$, where each $K_n$ is a clopen subset of $Y$ homeomorphic to $K$ via some homeomorphism $h_n \colon K \to K_n$ and the diameter of $K_n$ as well as $d(K_n,y)$ both converge to $0$ (in the previous section, when we wanted to absorb a point, $K_n$ was a singleton; we adapt that strategy and this part of the argument is very similar to what we did above).

For $n \in \N$ let $\Delta_n = h_n \Gamma_K h_n^{-1}$, which we view as a subgroup of $\Homeo(Y)$ (whose elements act trivially outside $K_n$); denote by $\Delta$ the full subgroup of $\Homeo(Y)$ generated by $\bigcup_{n \in \N} \Delta_n$. It is ample because every element of $\Delta$ fixes pointwise a neighborhood of $y$, and for any $n$ any $\delta \in \Delta$ coincides on $K_n$ with an element of $\Delta_n$.

Define $\Pi_0=\Delta_0=h_0 \Gamma_K h_0^{-1}$ and, for $n \ge 1$, $\Pi_n= h_n \Sigma h_n^{-1}$ (viewed as subgroups of $\Homeo(Y)$ as above) then let $\Pi$ be the full subgroup of $\Homeo(Y)$ generated by $\bigcup_{n \in \N} \Sigma_n$. Again, it is ample.

Then use Lemma \ref{l:everyone_is_a_small_extension} to find a minimal ample group $\Lambda$, a closed subset $Z$ which is both $\Lambda$- and $\Gamma$-malleable and a homeomorphism $g \colon Y \to Z$ such that $g\Delta g^{-1}=\Lambda_Z$, $g \Pi g^{-1}= \Gamma_Z$ and $R_\Gamma=R_\Lambda(Z,\Gamma_Z)$.

Write $Z=\bigcup_{n \in \N} Z_n \cup \{g(y)\}$ where for each $n$ $Z_n= g(K_n)=g \circ h_n(K)$. 

Denote $Z'=Z \setminus Z_0$. It is $\Lambda$-malleable since it is clopen in $Z$ and $\Lambda_Z$-invariant. Note that $\Gamma_{Z_0}=g \Pi_{0}g^{-1}=g \Delta_0 g^{-1} = \Lambda_{Z_0}$. So $R_\Lambda$ and $R_\Gamma$ coincide on $Z_0$ and it follows that $R_\Gamma=R_\Lambda(Z,\Gamma_Z)=R_\Lambda(Z',\Gamma_{Z'})$.

Similarly, $Z_0$ is $\Gamma$-malleable (again because $Z_0$ is clopen in $Z$ and $\Gamma_Z$-invariant). Define $\Theta_0 = (gh_0) \Sigma (gh_0) ^{-1}$. This is an ample group over $Z_0$.

Denote by $\Theta$ the ample subgroup of $\Homeo(Z)$ generated by $\Theta_0 \cup \bigcup_{n \ge 1} \Gamma_{Z_n}$. Let $f \colon Z \to Z'$ be such that for all $n$ and all $x \in K$ one has $f(gh_n(x))=gh_{n+1}(x)$. This is a homeomorphism.

Then $f \Lambda_Z f^{-1} = \Lambda_{Z'}$ and $f \Theta f^{-1}= \Gamma_{Z'}$. Since $Z'$ is $\Lambda$-malleable, we can apply Lemma \ref{l:any_copy_will_do} and obtain that $R_\Lambda(Z,\Theta)$ and $R_\Lambda(Z',\Gamma_{Z'})$ are orbit equivalent. In other words, $R_\Gamma(Z_0,\Theta_0)$ and $R_\Gamma$ are orbit equivalent.

We have that $Z_0$ is $\Gamma$-malleable, $(gh_0) \Gamma_K (gh_0)^{-1}=\Gamma_{Z_0}$ and 
$(gh_0) \Sigma (gh_0)^{-1}= \Theta_0$. Applying Lemma \ref{l:any_copy_will_do} once more, we get that $R_\Gamma(Z_0,\Theta_0)$ and $R_\Gamma(K,\Sigma)$ are orbit equivalent. Hence $R_\Gamma$ is orbit equivalent to $R_\Gamma(K,\Sigma)$, and the absorption theorem is proved.
\end{proof}

\section{Connection to Matui's absorption theorem}
We briefly sketch (without proofs) why Theorem \ref{t:absorption} enables one to recover Matui's absorption theorem. Here it is assumed that the reader already knows the theory of AF equivalence relations and Bratteli diagrams; we use liberally the terminology of Matui's article \cite{Matui2008}.

First, we recall that a minimal AF relation $R$ on the Cantor space $X$ is the same thing as an equivalence relation induced by a minimal action of an ample group $\Gamma$ over $X$, endowed with the topology associated to this action; this follows for instance from Theorem 3.9 of \cite{Giordano2004} (see also the sixth chapter of Putnam's book \cite{Putnam2018}). Given such an ample group $\Gamma$, saying that a closed subset $Y$ of $X$ is $\Gamma$-étale corresponds to the statement that the restriction of $R$ to $Y$, with the relative topology induced from the topology of $R$, is an étale equivalence relation on $Z$ (see Theorem 3.11 of \cite{Giordano2004}). 

Given a minimal AF relation $R$ on a compact, $0$-dimensional $Y$ and an open subrelation $R'$ of $R$, it follows from the argument in the proofs of Proposition 3.12(ii) and Theorem 3.9 of \cite{Giordano2004} that there exists an ample group $\Delta$ over $Y$ which induces $R'$, and given such a $\Delta$ one can produce an ample group $\Gamma$ over $Y$containing $\Delta$ which induces $R$.

Now, let $R$ be a minimal AF equivalence relation on $X$, $Y$ be a closed, $R$-étale and $R$-thin subset and assume that $Q \subset Y \times Y$ is an AF equivalence relation which is an étale extension of $R_{|Y}$. It follows from the previous discussion that there exists an ample subgroup $\Gamma$ over $X$ which induces $R$, and an ample subgroup $\Sigma$ over $Y$ which contains $\Gamma_Y$ and which induces $Q$. Then the proof of Theorem \ref{t:absorption} produces a homeomorphism $h$ of $X$ such that:
\begin{itemize}
\item $h \times h (R \vee Q)= R$.
\item $h(Y)$ is closed, $R$-étale and $R$-thin (equivalently, $\Gamma$-malleable).
\item $h_{|Y} \times h_{|Y}$ is a homeomorphism from $Q$ to $R_{|h(Y)}$ (because $h \Sigma h^{-1}=\Gamma_{h(Y)}$).
\end{itemize}

That is the statement of Matui's absorption theorem. Conversely, it is clear that Matui's absorption theorem implies Theorem \ref{t:absorption} as stated (though the proof of Theorem \ref{t:absorption} actually gives a somewhat stronger result if one tracks down what happens to the ample subgroups involved, and it is not immediately clear whether this can also be easily recovered from Matui's argument, although that seems quite likely).

\newpage

\begin{appendices}
        \begin{center}\section*{Appendix}
        
        \textbf{Corrigendum to the paper ``From invariant measures to orbit equivalence, via locally finite groups'', by J. Melleray and S. Robert}
\end{center}        
\setcounter{section}{1}
\setcounter{theorem}{0}

As explained in the introduction, the proof of the classification theorem of Giordano, Putnam and Skau given in \cite{Melleray2023} has a gap. The same issue occurs twice, so that the proofs of the absorption theorem \cite{Melleray2023}*{Theorem~5.1} as well as the proof of the classification theorem \cite{Melleray2023}*{Theorem~5.2} are incorrect. The absorption theorems that one can obtain from the arguments of \cite{Melleray2023} (one can for instance recover \cite{Melleray2023}*{Theorem~6.4}) do not appear to be sufficient to fix the proof of the classification theorem. 

Let us explain the issue briefly; here and below we reuse the notations and terminology of \cite{Melleray2023}. The idea of our proof of the classification theorem is to start from a minimal ample group $\Gamma$ and build a $\Gamma$-sparse $K$ (the set of ``singular'' points in that proof), an involution $\pi \colon K \to K$ and a minimal ample group $\Lambda$, generated as a full group by $\Gamma$ and $\pi$, so that $R_\Lambda$ is obtained from $R_\Gamma$ by gluing together the orbits of $x$ and $\pi(x)$ for every $x \in K$. Unfortunately, the proof given in \cite{Melleray2023} does not achieve this; the reason is that there are redundancies in the definition so that the full group $\Lambda$ generated by $\Gamma$ and $\pi$ as constructed in that proof need not be ample. To fix this and ensure that $\Lambda$ is ample one needs to avoid those redundancies; but then one cannot force the singular points to belong to different $\Gamma$-orbits. One can however guarantee that the set $K$ of singular points is $\Gamma$-malleable, and then using Matui's absorption theorem (i.e.~Theorem \ref{t:absorption}) allows the argument to go through.

We now explain how to combine Theorem \ref{t:absorption} with the argument of \cite{Melleray2023} to obtain a proof of the classification theorem for minimal ample groups. 

We fix a minimal ample group $\Gamma$, and assume that $\sim_\Gamma$ and $\sim_\Gamma^*$ do not coincide. We want to prove that $R_\Gamma$ is orbit equivalent to an action induced by a minimal ample group $\Lambda$ with the property that $\sim_\Lambda$ and $\sim_\Lambda^*$ both coincide with $R_\Gamma^*$ (we say that $\Lambda$ is \emph{saturated} when $\sim_\Lambda$ and $\sim_\Lambda^*$ coincide), so as to employ Krieger's theorem one last time to prove the classification theorem. To reach this objective, we want to prove that there exists such an ample group $\Lambda$ and a malleable subset $K$ such that $\Gamma_{|K} \subseteq \Lambda$ and $R_\Lambda=R_{\Gamma}(K,\Lambda_{|K})$, in order to apply the absorption theorem. We do this by building an involution $\pi$ and a conjugate $\tilde \Gamma$ of $\Gamma$ in such a way that $\tilde \Gamma$ and $\pi$ generate our desired ample group $\Lambda$, and $(x,\pi(x)) \in R_{\tilde \Gamma}$ for every $x$ outside of a malleable set. In terms of partitions, $K$ is created from atoms that one is ``forced'' to group together because of pairs of clopen subsets which are $\sim_{\Gamma}^*$-equivalent even though they are not $\sim_\Gamma$-equivalent (recall that we want to make such pairs $\Lambda$-equivalent).

We choose an enumeration $(U_n,V_n)_n$ of all pairs of $\sim_\Gamma^*$-equivalent clopen subsets of $X$ and assume that $U_0$, $V_0$ are disjoint, $U_0 \not \sim_\Gamma V_0$ and $U_0 \cup V_0 \neq X$.

First, we slightly modify \cite{Melleray2023}*{Lemma~5.3} to obtain the following:

\begin{lemma}\label{l:main_construction}
We may build a sequence of $\sim_\Gamma$-partitions $(\mcA_n)$, with distinguished orbit pairs $O(\alpha_1^n),\ldots,O(\alpha_{k_n}^n), O(\beta_1^n), \ldots O(\beta_{k_n}^n)$ $(1 \le k_n$ for all $n$) satisfying the following conditions. 

\begin{enumerate}
\item $k_0=1$, $\alpha_1^0=U_0$, $\beta_1^0=V_0$ and $\mcA_0 = \{\alpha_1^0,\beta_1^0,X \setminus (\alpha_1^0 \cup \beta_1^0)\}$ (three orbits of cardinality $1$).
\end{enumerate}
For all $n$ one has:
\begin{enumerate}
\setcounter{enumi}{1}
\item $\mcA_{n+1}$ refines $\mcA_n$.
\item If $U_n \sim_\Gamma V_n$ then $U_n$ and $V_n$ are $\mcA_n$-equivalent.
\item $(\alpha_1^n,\ldots,\alpha_{k_n}^n, U_{n+1})$ and $(\beta_1^n,\ldots,\beta_{k_n}^n,V_{n+1})$ are almost $\mcA_{n+1}$-equivalent, as witnessed by the exceptional orbits 
$$O(\alpha_1^{n+1}),\ldots,O(\alpha_{k_{n+1}}^{n+1}) \ , \ O(\beta_1^{n+1}),\ldots,O(\beta_{k_{n+1}}^{n+1}).$$
\item For all $i$ $\alpha_i^n \not \sim_\Gamma \beta_i^n$.
\item For all $i$, $\alpha_i^{n+1}$ is contained in $\alpha_1^n$ and  $\beta_i^{n+1}$ is contained in $\beta_1^n$.
\item Let $h_n$ be the number of atoms of $\mcA_n$; denote 
\begin{eqnarray*}
N^{(n)}_i&=& \max\{|n_O(\alpha^n_i) - n_O(\beta^n_i)| : O \text{ is a } \mcA_{n+1}-\text{orbit} \} \quad (i \le k_n), \\
N^{(n)} &=& \sum_{i=1}^{k_n} N^{(n)}_i .
\end{eqnarray*}
Then every exceptional $\mcA_{n+1}$-orbit contains more than $(n+1) h_n (N^{(n)}+2)$ fragments of every $\mcA_n$-orbit.
\end{enumerate}
\end{lemma}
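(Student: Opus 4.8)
The plan is to argue by induction on $n$, following the construction of \cite{Melleray2023}*{Lemma~5.3}; the sole modification is that condition~(5) now only requires $\alpha_i^n \not\sim_\Gamma \beta_i^n$ rather than forcing the corresponding points into distinct $\Gamma$-orbits, so I am in fact relaxing a constraint and the original argument should adapt directly. The quantity I would carry through the induction is the measure balance
\[ \sum_{i=1}^{k_n} \mu(\alpha_i^n) = \sum_{i=1}^{k_n} \mu(\beta_i^n) \quad \text{for every } \mu \in M(\Gamma). \]
At $n=0$ this holds because $(U_0,V_0)$ is a $\sim_\Gamma^*$-pair, so $\mu(\alpha_1^0)=\mu(U_0)=\mu(V_0)=\mu(\beta_1^0)$, and the base case is exactly condition~(1). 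Thus all the work sits in the inductive step passing from level $n$ to level $n+1$.

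For that step I would first replace $\mcA_n$ by a common refinement with a sufficiently fine member of an exhaustive sequence for $\Gamma$, chosen fine enough that $U_{n+1}$ and $V_{n+1}$ are unions of atoms. When $U_{n+1} \sim_\Gamma V_{n+1}$, fullness of $\sim_\Gamma$ together with Lemma~\ref{l:equidecomposability_on_clopens_is_induced_by_full_group} lets me place them in a common orbit, securing condition~(3). The heart of the step is to match the tuples $(\alpha_1^n,\dots,\alpha_{k_n}^n,U_{n+1})$ and $(\beta_1^n,\dots,\beta_{k_n}^n,V_{n+1})$: by the induction hypothesis and $\mu(U_{n+1})=\mu(V_{n+1})$ these two tuples carry equal total mass for every $\mu \in M(\Gamma)$, so Lemma~\ref{l:Glasner_Weiss} permits a $\sim_\Gamma$-matching of all but a controlled remainder. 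I would record that remainder as the new exceptional orbit pairs $\alpha_\bullet^{n+1},\beta_\bullet^{n+1}$, which yields condition~(4); and since the matched parts have equal mass, the balance above is inherited at level $n+1$.

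It remains to arrange conditions~(5)--(7). I would sweep the entire unmatched discrepancy into the first distinguished orbit, by cutting $\alpha_1^n$ and $\beta_1^n$ finely and housing the excess orbit-counts there, which gives condition~(6); the residual $\alpha$-excess is then paired against the residual $\beta$-excess, and these pairs can be taken genuinely non-$\sim_\Gamma$-equivalent because the seed $\alpha_1^0 \not\sim_\Gamma \beta_1^0$ forces an irreducible defect, giving condition~(5). Finally, the fragment bound~(7) is a counting matter: once the refinement is fixed, $N^{(n)}=\sum_i N_i^n$ is a determined finite quantity, and by first cutting every $\mcA_{n+1}$-orbit into abundantly many fragments of each $\mcA_n$-orbit before grouping, I can guarantee that each exceptional orbit contains more than $(n+1)h_n(N^{(n)}+2)$ such fragments.

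The main obstacle is the simultaneous juggling in this last step: one must perform the $\sim_\Gamma$-matching of condition~(4) while forcing the residual defect to live inside $\alpha_1^n,\beta_1^n$ as non-equivalent pairs (conditions~(5)--(6)) and still leave each exceptional orbit with the quantitatively large supply of fragments demanded by~(7), all without disturbing the measure balance. Keeping these requirements compatible is precisely the delicate bookkeeping of \cite{Melleray2023}*{Lemma~5.3}; the present relaxation of condition~(5) removes the rigidity that made the earlier sparse version problematic, so once Lemma~\ref{l:Glasner_Weiss} and Lemma~\ref{l:equidecomposability_on_clopens_is_induced_by_full_group} are in hand the estimates there should carry over.
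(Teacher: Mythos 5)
Your route is essentially the paper's: the paper gives no self-contained proof of this lemma either, but simply reruns the inductive cutting-and-stacking construction of \cite{Melleray2023}*{Lemma~5.3} and explains how to secure the two genuinely new requirements. However, you have misidentified what those new requirements are: condition (5) ($\alpha_i^n \not\sim_\Gamma \beta_i^n$) is \emph{not} the modification -- it is already present in \cite{Melleray2023}, and the relaxation from ``distinct orbits'' to ``malleable'' concerns the later construction of the singular set $K$, not this lemma. The actual changes are the addition of condition (6) and the enlarged constant in condition (7); your sketch does touch both, but the clean justification for (6) is simply that every exceptional $\mcA_{n+1}$-orbit contains a fragment of every $\mcA_n$-orbit, so the distinguished representatives $\alpha_i^{n+1}$, $\beta_i^{n+1}$ may be chosen inside the prescribed atoms $\alpha_1^n$, $\beta_1^n$ -- rather than ``sweeping the discrepancy'' into them, which conflates choosing representatives with relocating the imbalance.
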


The slight modification alluded to above is twofold (and of a purely technical nature). First, the penultimate point in the Lemma's statement is not present in \cite{Melleray2023} (but should be, given how the construction proceeds). It is not hard to enforce this condition: since every $\mcA_{n+1}$-orbit contains a fragment of every $\mcA_n$-orbit, we may choose  each $\alpha_i^{n+1}$ so that it is contained in any prescribed nonempty clopen $U$, and similarly for $\beta_i^{n+1}$. The numerical constant in the last point also changed, but this is purely cosmetic, see the end of the proof of \cite{Melleray2023}*{Proposition~4.10}.

As in \cite{Melleray2023} we define a sequence of $\sim_\Gamma^*$-partitions $\mcB_n$ by joining together the $\mcA_n$-orbits of $\alpha_i^n$ and $\beta_i^n$ for each $i \in \{1,\ldots,k_n\}$ and leaving the other orbits unchanged. Then $\mcB_{n+1}$ refines $\mcB_n$ for all $n$.

We now have in hand two sequence of partitions. Similarly to what we did to prove Lemma \ref{l:everyone_is_a_small_extension}, we build two ample groups associated to these partitions (one of them is a conjugate of $\Gamma$, and the other is a larger, saturated ample group $\Lambda$ such that $\sim_{\Lambda}=\sim_{\Gamma}^*$). That is, we replace \cite{Melleray2023}*{Lemma~5.4} with the following result.

\begin{lemma}
Denote by $\tilde \Gamma_n$ the group of permutations of $\mcA_n$ which map each $\mcA_n$-orbit to itself, and by $\Lambda_n$ the group of permutations of $\mcB_n$ which map each $\mcB_n$-orbit to itself. Then we can build embeddings $i_n \colon \tilde \Gamma_n \to \tilde \Gamma_{n+1}$ and $j_n \colon \Lambda_n \to \Lambda_{n+1}$ such that, for all $n$:
\begin{enumerate}
\item The action of $i_n(\tilde \Gamma_n)$ on $\mcA_{n+1}$ extends the action of $\tilde \Gamma_n$ on $\mcA_n$, and the action of $j_n(\Lambda_n)$ on $\mcB_{n+1}$ extends the action of $\Lambda_n$ on $\mcA_n$.
\item $j_n$ coincides with $i_n$ on $\tilde \Gamma_n$.
\item Let $\pi_0$ be the involution in $\Lambda_0$ mapping $\alpha_1^0$ to $\beta_1^0$ and define for $n \ge 1$ $\pi_n=j_{n-1} \circ \ldots \circ j_0(\pi_0)$. Then $\pi_n(\alpha_j^n)=\beta_j^n$ for all $j$.
\item Say that an atom $\alpha$ of $\mcA_n$ is \emph{singular} if $\pi_n(\alpha) \not \in \tilde \Gamma_n \alpha$ and let $K_n$ denote the union of all singular atoms of $\mcA_n$. Then $\mu(K_{n+1}) \le \frac{1}{n+1}$ for all $\mu \in M(\tilde \Gamma)$ and all $n$.
\end{enumerate}
\end{lemma}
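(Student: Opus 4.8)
The plan is to construct the two families of embeddings simultaneously by induction on $n$, following the template of Steps 2 and 3 in the proof of Lemma~\ref{l:everyone_is_a_small_extension} (and of \cite{Melleray2023}*{Lemma~5.4}). The mechanism is the same in both cases: to embed a group of orbit-preserving permutations of a partition into the analogous group for a refinement, I fix a representative atom in each orbit and, for every other atom of that orbit, a bijection between the atoms of the refinement sitting inside the representative and those sitting inside the given atom, chosen so as to respect the orbit structure of the refinement. Reading off these bijections turns each permutation $g$ of the coarse partition into a permutation of the fine one that extends the action of $g$, and $g \mapsto {}$(this permutation) is automatically an injective homomorphism by the same elementary computation as in Step 2. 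This immediately yields the extension properties (1).

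Concretely, for $i_n \colon \tilde\Gamma_n \to \tilde\Gamma_{n+1}$ I apply this to $\mcA_n$ and its refinement $\mcA_{n+1}$, requiring the bijections to send each atom of an $\mcA_{n+1}$-orbit to an atom of the same $\mcA_{n+1}$-orbit; this is possible because $\mcA_{n+1}$ refines $\mcA_n$ as a $\sim_\Gamma$-partition, so every $\mcA_{n+1}$-orbit meets each atom of a fixed $\mcA_n$-orbit the same number of times. For $j_n \colon \Lambda_n \to \Lambda_{n+1}$ I do the same for $\mcB_n$ and $\mcB_{n+1}$, but now two kinds of bijections must be coordinated. On every $\mcB_n$-orbit that is also an $\mcA_n$-orbit, and on the same-side pieces of a merged orbit, I reuse the $\mcA_{n+1}$-orbit-preserving bijections of $i_n$; this forces $j_n$ and $i_n$ to agree on $\tilde\Gamma_n$, giving (2), and guarantees $j_n(\tilde\Gamma_n) \subseteq \tilde\Gamma_{n+1}$. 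On the remaining, crossing pieces of a merged orbit (the union of the $\mcA_n$-orbits of some $\alpha_i^n$ and $\beta_i^n$) I use the almost $\mcA_{n+1}$-equivalence of condition~(4) of Lemma~\ref{l:main_construction}, which matches the atoms on the $\alpha$-side with $\sim_\Gamma$-equivalent atoms on the $\beta$-side except for those lying in the exceptional orbits $O(\alpha_j^{n+1})$, $O(\beta_j^{n+1})$.

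Property (3) follows by induction. Indeed $\pi_0(\alpha_1^0) = \beta_1^0$ holds by definition, and assuming $\pi_n(\alpha_1^n) = \beta_1^n$, the sets $\alpha_1^n, \beta_1^n$ lie in a common $\mcB_n$-orbit and $\pi_{n+1} = j_n(\pi_n)$ maps the $\mcA_{n+1}$-pieces of $\alpha_1^n$ to those of $\beta_1^n$; I arrange the defining bijection to send $\alpha_j^{n+1} \subseteq \alpha_1^n$ to $\beta_j^{n+1} \subseteq \beta_1^n$ (legitimate as these share a $\mcB_{n+1}$-orbit), which gives (3). Crucially, the same bijection sends each non-exceptional piece of $\alpha_1^n$ to a $\sim_\Gamma$-equivalent, hence same-orbit, piece of $\beta_1^n$, and maps the pieces of an exceptional orbit $O(\alpha_j^{n+1})$ inside $\alpha_1^n$ to pieces of the \emph{same} orbit inside $\beta_1^n$ wherever possible---possible because by condition~(7) the orbit $O(\alpha_j^{n+1})$ has many atoms in both $\alpha_1^n$ and $\beta_1^n$. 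Thus $\pi_{n+1}$ keeps all but a defect-sized set of atoms inside their own $\mcA_{n+1}$-orbit; the atoms it genuinely moves to a different orbit---the singular ones constituting $K_{n+1}$---are the unavoidable crossings between paired exceptional orbits, and the construction is arranged so that each atom of $\mcA_n$ contains at most of order $N^{(n)}$ of them.

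The measure estimate (4) is the main point, and it is exactly where the quantitative condition~(7) of Lemma~\ref{l:main_construction} is used. Fix $\mu \in M(\tilde\Gamma)$ and an atom $A$ of $\mcA_n$ with $\mcA_n$-orbit $\rho$. Each singular atom $c \subseteq A$ belongs to an exceptional $\mcA_{n+1}$-orbit $O$, and by condition~(7) that orbit contains more than $(n+1) h_n (N^{(n)}+2)$ fragments of $\rho$, hence more than $(n+1) h_n (N^{(n)}+2)$ pairwise disjoint $\sim_\Gamma$-copies of $c$ inside $A$; the elementary copy-counting bound coming from $\tilde\Gamma$-invariance (in the spirit of Lemma~\ref{l:Glasner_Weiss}) then gives $\mu(c) \le \mu(A) / \bigl((n+1) h_n (N^{(n)}+2)\bigr)$. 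Since $A$ contains at most of order $N^{(n)}$ singular atoms, summing yields $\mu(K_{n+1} \cap A) \le \mu(A) / \bigl((n+1) h_n\bigr)$, the stray constant being absorbed by the ``$+2$'', and summing over the $h_n$ atoms of $\mcA_n$ (with $\sum_A \mu(A) = 1$) gives $\mu(K_{n+1}) \le 1/(n+1)$. I expect the bookkeeping of this last paragraph---tracking precisely how the total defect $N^{(n)}$ bounds the number of singular atoms in each $A$ and matching the numerical constant of condition~(7)---to be the only delicate step; it is a routine adaptation of the estimate at the end of the proof of \cite{Melleray2023}*{Proposition~4.10}.
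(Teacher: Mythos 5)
Your proposal follows essentially the same route as the paper: build $i_n$ by the matching mechanism of Step~2 of Lemma~\ref{l:everyone_is_a_small_extension}, extend it to $j_n$ by pairing same-orbit atoms on balanced pieces and using the almost $\mcA_{n+1}$-equivalence (with the forced choice $\alpha_j^{n+1}\mapsto\beta_j^{n+1}$) on the merged orbits, and derive the measure bound from condition~(7) because every singular atom of $\mcA_{n+1}$ lies in an exceptional orbit with many disjoint $\sim_\Gamma$-copies. The only cosmetic difference is that you organize the final estimate atom-by-atom in $\mcA_n$ (getting $\mu(K_{n+1}\cap A)\le\mu(A)/((n+1)h_n)$ and summing) whereas the paper bounds the total number $l_{n+1}\le h_n(N^{(n)}+2)$ of singular atoms globally; the computation is the same.
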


\begin{proof}
To build the sequence of embeddings $i_n$, one can use an argument similar to the one we used in the second step of the proof of Lemma \ref{l:everyone_is_a_small_extension}. So we assume that this sequence has been constructed, and focus on the definition of $j_n$; assume that all our conditions are satisfied up to rank $n-1$ and we have to define $j_n$.

Given $i \in \{1,\ldots,k_n\}$, let $\sigma_i$ be the element of $\Lambda_n$ mapping $\alpha_i^n$ to $\beta_i^n$. Defining $j_n$ amounts to defining $j_n(\sigma_i)$ for all $i$; we first deal with $\sigma_1$.
Let $\Omega$ be a $\mcB_n$-orbit. 

If $\Omega$  not an exceptional orbit, we may list all atoms of $\Omega$ contained in $\alpha_1^n$ as $\{U_1,\ldots,U_p\}$ and all atoms of $\Omega$ contained in $\beta_1^n$ as $\{V_1,\ldots,V_p\}$ with $U_i$, $V_i$ belonging to the same $\mcA_{n+1}$-orbit for all $i$, then set $j_n(\sigma_1)(U_k)=V_k$. Note that on $\Omega$ $j_n(\sigma_1)$ coincides with an element of $\tilde \Gamma_{n+1}$.

Else, $\Omega$ is the disjoint union of the $\mcA_{n+1}$-orbit $\Omega_1$ of some $\alpha_j^{n+1}$ and the $\mcA_{n+1}$-orbit $\Omega_2$ of $\beta_j^{n+1}$. Note that $\alpha_j^{n+1} \subseteq \alpha_1^n$ and $\beta_j^{n+1} \subseteq \beta_1^n$ so we may set $j_n(\sigma_1)(\alpha_j^{n+1})=\beta_j^{n+1}$ and $j_n(\sigma_1)(\beta_j^{n+1})=\alpha_j^{n+1}$.

There exists $p \le N_1^{(n)} +1$ atoms $U_1,\ldots,U_p \sim_\Gamma \alpha_j^{n+1}$ and $V_1,\ldots,V_p \sim_\Gamma \beta_j^{n+1}$ such that $\Omega_1 \setminus \{U_1,\ldots,U_p\}$ and $\Omega_2 \setminus\{V_1,\ldots,V_p\}$ each have as many atoms contained in $\alpha_1^n$ and $\beta_ 1^n$ (the term ``$+1''$ comes from the fact that by asking that $\alpha_j^{n+1}$ be mapped to $\beta_j^{n+1}$ we may have slightly increased the imbalance between $\Omega_1$ and $\Omega_2$). We may then pair these other atoms to define $j_n(\sigma_1)$ so that $j_n(\sigma_1)$ sends each of them to an atom belonging to the same $\mcA_{n+1}$-orbit. We also agree that, for all $i \in \{1,\ldots,p\}$,
\[j_n(\sigma_1)(U_i)= V_i \quad \text{ and } j_n(\sigma_1)(V_i)= U_i .\] 

We have now defined $j_n(\sigma_1)$. There are at most $N_1^{(n)}+2$ atoms $U \in \mcA_n$  such that $j_n(\sigma_1)(U)$ and $U$ belong to different $\mcA_{n+1}$-orbits. By induction, we see that we have ensured $\pi_{n+1}(\alpha_j^{n+1})=\beta_j^{n+1}$ for each $j$, since we had $\pi_n(\alpha_1^n)=\beta_1^n$. 

To define $j_n(\sigma_i)$ for $i \ge 2$, we proceed similarly: if $\Omega$ is $(\alpha_i^n,\beta_i^n)$-balanced, then we match atoms of $\Omega$ as above so that on $\Omega$ $j_n(\sigma_i)$ coincides with an element of $\tilde \Gamma_{n+1}$.
Else, there must again exist some $j$ such that $\Omega$ is the disjoint union of the $\mcA_{n+1}$-orbit $\Omega_1$ of some $\alpha_j^{n+1}$ and the $\mcA_{n+1}$-orbit $\Omega_2$ of $\beta_j^{n+1}$, and we again find $U_1,\ldots,U_p \sim_\Gamma \alpha_j^{n+1}$, $V_1,\ldots,V_p \sim_\Gamma \beta_j^{n+1}$ such that $p \le N_j^{(n)}$ and $\Omega_1 \setminus \{U_1,\ldots,U_p\}$, $\Omega_2 \setminus\{ V_1,\ldots,V_p\}$ each have as many atoms contained in $\alpha_i^n$ and $\beta_i^n$. We can pair these atoms to define $j_n(\sigma_i)$ there (and on those atoms it coincides with an element of $\tilde \Gamma_n$). We again set $j_n(\sigma_i)(U_r)=V_r$, $j_n(\sigma_i)(V_r)=U_r$ for each $r$ in $\{1,\ldots,p\}$.

This completes the definition of $j_n$. 

Denote by $l_n$ the number of singular atoms in $\mcA_n$. Since $j_n$ coincides with $i_n$ on $\tilde \Gamma_n$, any singular atom $U$ of $\mcA_{n+1}$ is contained in a singular atom $U'$ of $\mcA_n$; and there exists some $\gamma_1,\gamma_2 \in \tilde \Gamma_n$ and $i \in \{1,\ldots,k_n\}$ such that $\pi_n(U)= \gamma_1 j_n(\sigma_i) \gamma_2(U)$. Given that fewer than $2(N^{(n)}+2)$ singular atoms have been created from the two singular atoms constituting the support of $\sigma_i$ when defining $j_n(\sigma_i)$, a very coarse estimate gives: 
\[l_{n+1} \le l_n (N^{(n)}+2) \le h_n (N^{(n)}+2) .\]
Since there are more than $(n+1) h_n (N^{(n)}+2)$ fragments of each $\mcA_n$-orbit in every exceptional $\mcA_{n+1}$-orbit, $\mu(K_{n+1}) \le \frac{1}{n+1}$ for all $\mu \in M(\tilde \Gamma)$.
\end{proof}

With these definitions in hand, let $\tilde \Gamma$ be the inductive limit of $(\tilde \Gamma_n,i_n)$ and $\Lambda$ be the inductive limit of $(\Lambda_n,j_n)$. Denote also by $j_n$ the embedding of $\Lambda_n$ in $\Lambda$ and let $\pi=j_0(\pi_0)$. Since $\pi(\alpha_j^n)=\beta_j^n$ for all $j$ and all $n$, $\Lambda$ is generated as a full group by $\tilde \Gamma$ and $\pi$. 

Then $\tilde \Gamma$ is a minimal ample group and $\sim_{\tilde \Gamma}$ coincides with $\sim_\Gamma$; $\Lambda$ is a saturated minimal ample group and $\sim_\Lambda$ coincides with $\sim_\Gamma^*$ (this is straightforward to check, for details see the arguments of \cite{Melleray2023}). We thus have $M(\Lambda)=M(\tilde \Gamma)=M(\Gamma)$.

Let $K=\bigcap_n K_n$. It is closed, and $\mu(K)=0$ for every $\mu \in M(\Lambda)$.

We now show that $K$ is $\Lambda$-étale.
Let $U$ be clopen in $K$ and $\gamma \in \Lambda$. Then for some $n$ we have both that $\lambda \in j_n(\Lambda_n)$ and that there exist singular atoms $U_1,\ldots,U_p$ of $\mcA_n$ such that $U \cap K= \bigsqcup_i (U_i \cap K)$. We may as well assume that $\lambda$ is the involution of $j(\Lambda_n)$ with support $U_i \sqcup \lambda(U_i)$ for some $i \in \{1,\ldots,p\}$. If $\lambda(U_i)$ is not a singular atom then $\lambda (U_i) \cap K= \emptyset$. Assume now that $\lambda(U_i)$ is singular; there are two cases to consider. If $\lambda(U_i)$ and $U_i$ belong to the same $\mcA_n$-orbit then there exists $\gamma \in \Gamma'$ such that $\lambda$ coincides with $\gamma$ on $U_i$, whence $\lambda(U_i \cap K)=\lambda(U_i) \cap K$.
Else, there exist $\gamma_1, \gamma_2 \in \Gamma'$ and some $j$ such that $\lambda$ coincides on $U_i$ with $\gamma_1 \pi \gamma_2$, and $\gamma_2(U_i)$ is singular. We again obtain  $\lambda(U_i \cap K) = \lambda(U_i) \cap K$. This proves that $K$ is $\Lambda$-malleable (hence also $\tilde \Gamma$-malleable).

For any $x \not \in K$ we have $\pi(x) \in \tilde \Gamma x$. Since $\Lambda$ is generated, as a full group, by $\tilde \Gamma$ and $\pi$, this implies that $R_\Lambda=R_{\tilde \Gamma}(K,\Lambda_{|K})$.

We finally conclude, thanks to Theorem \ref{t:absorption}, that $R_\Gamma$ is orbit equivalent to $R_\Lambda$; so every minimal ample group is orbit equivalent to a saturated minimal ample group and (by Krieger's theorem) this concludes the proof of the classification theorem for minimal ample groups.

\bigskip

Writing this appendix offers an opportunity to discuss another imprecision in \cite{Melleray2023}. In part (2) of \cite{Melleray2023}*{Lemma 3.4}, the assumption that $\Gamma$ acts topologically transitively is not sufficient for the argument to go through (despite our claim that ``it is the natural hypothesis to make the argument work''...). The reason is that the proof uses implicitly that for any nonempty clopen set $A$ one has $\inf_{\mu \in M(\Gamma)} \mu(A) >0$. That condition is in fact equivalent to assuming that $\Gamma$ acts minimally on $X$. I do not know in which generality point (2) of \cite{Melleray2023}*{Lemma~3.4} holds. Fortunately, this does not affect the arguments of \cite{Melleray2023} that use Lemma~3.4 since we are everywhere concerned with minimal actions.
    \end{appendices}

\bibliography{biblio_GPS}

\end{document}